\theoremstyle{plain}
\newtheorem{theoremalph}{Théorème}
\author{Guillaume Bulteau}
\address{Institut Montpelliérain Alexander Grothendieck (Imag)\\ UMR  CNRS 5149 - Université Montpellier 2\\
Case courrier 051\\
34095 Montpellier cedex 5 - France}
\email{guillaume.bulteau@math.univ-montp2.fr}
\title{Cycles géométriques réguliers}
\begin{document}



\begin{abstract}
Soit $\pi$ un groupe de présentation finie. Pour une classe d'homologie $h$ non nulle dans $H_n(\pi;\mathbb{Z})$,  Gromov a énoncé (dans \cite{Gromov-FRM}, §6) l'existence de cycles géométriques qui représentent $h$, de volume systolique relatif aussi proche que l'on veut de celui de $h$, pour lesquels on dispose d'un contrôle sur le volume des boules dont le rayon est plus petit qu'une fraction de la systole relative du cycle. L'objectif  de cette note est d'expliquer ce résultat et d'en présenter une démonstration complète.
\end{abstract}


\subjclass[2010]{53C23 : Global geometric and topological methods}
\keywords{Cycles géométriques, systole, volume systolique, espace d'Eilenberg-McLane, complexes cubiques}
\thanks{Je remercie Florent Balacheff, Jacques Lafontaine et Stéphane Sabourau pour toutes les remarques qu'ils ont pu faire sur les premières versions de ce texte, ainsi qu'Ivan Babenko pour les nombreuses discussions sur ce sujet. Je remercie également un rapporteur anonyme pour ses commentaires, qui ont permis d'améliorer grandement certaines parties de ce texte.\\
Ce travail est financé par l'ANR Finsler}
\maketitle

\def\R{\text{\normalfont{I\hspace{-.15em}R}}}
\def\N{\text{\normalfont{I\hspace{-.15em}N}}}
\newcommand {\Z} {\mathbb{Z}}
\renewcommand{\dfrac}{\displaystyle \frac}
\newcommand{\dint}{\displaystyle \int}
\newcommand{\dsum}{\displaystyle \sum}
\newcommand{\norm}[1]{\left|\!\left| #1 \right|\!\right|}
\renewcommand{\mod}[1]{\left| #1 \right|}
\renewcommand{\le}{\leqslant}
\renewcommand{\ge}{\geqslant}
\newcommand {\eps} {\varepsilon}
\newcommand{\sys}{\mbox{\normalfont{\text{syst}}}}
\newcommand {\vol}{\mbox{\normalfont{\text{vol}}}}
\newcommand {\Vol}{\mbox{\normalfont{\text{Vol}}}}
\newcommand {\mass}{\mbox{\normalfont{\text{mass}}}}
\newcommand {\Long}{\mbox{\normalfont{long}}}
\newcommand{\dist}{\normalfont{\text{dist}}}
\newcommand {\volremp}{\normalfont{\text{Vol\:Remp}}}
\newcommand {\id}{\normalfont{\text{Id}}}
\newcommand{\quot}[2]{\raise0.7ex\hbox{$#1$} \!\mathord{\left/
 {\vphantom { {n}}}\right.\kern-\nulldelimiterspace}
\!\lower0.7ex\hbox{$#2$}}
\newcommand {\K} {K(\pi,1)}
\newcommand{\tub}{\normalfont{\text{Tube}}}
\renewcommand{\d}{\normalfont{\text{d}}}
\newcommand{\eq}{\begin{eqnarray*}}
\newcommand{\fineq}{\end{eqnarray*}}
\newcommand{\w}{\widetilde}
\newcommand \set[1]{ \left\{#1\right\}}
\newcommand {\im}{\mbox{\normalfont{Im}}}
\newcommand{\et}{\ \ \text{et}\ \ }
\newcommand{\tend}{\underset{+\infty}{\longrightarrow}}
\theoremstyle{plain}
\newtheorem{theo}{Théorème}
\newtheorem{lemm}{Lemme}
\newtheorem{résultat}{Résultat}
\newtheorem{assertion}{Assertion}
\newtheorem{question}{Question}
\newtheorem{com}{Commentaire}
\newtheorem*{gromov}{Inégalité systolique de Gromov}
\newtheorem*{inegiso}{Inégalité isopérimétrique dans \boldmath $L$\unboldmath}
\theoremstyle{remark}
\newtheorem{rema}{Remarque}
\newtheorem{Rema-ess}[rema]{Remarque essentielle}
\theoremstyle{definition}
\newtheorem{defi}{Définition}
\newtheorem{exem}{Exemple}


\renewcommand\contentsname{Sommaire}
\tableofcontents
\section{Introduction}

\subsection{Le cadre et le but}
Dans la suite $\pi$ désignera un groupe de présentation finie et $n\ge 2$ est un entier. Considérons l'espace d'{Eilenberg-MacLane} $\K$ et prenons une classe d'homologie $h$ dans $H_n(\pi;Z)$ qui, par définition, est le groupe d'homologie $H_n(\K;\Z)$. Rappelons que $\K$ est un CW-complexe connexe tel que $\pi_1(\K)=\pi$ et $\pi_i(\K)=0$ pour tout $i\ge 2$ entier. Un tel espace est unique à type d'homotopie près (voir \cite{Hatcher}). \\

On appellera \textit{cycle géométrique représentant $h$} un triplet $(V,f,g)$ où 
\begin{quote}
\begin{itemize}
	\item $V$ est une pseudo-variété orientable de dimension $n$;
	\item $f:V\to \K$ est un application continue telle que  $f_*[V]=h$, 
$[V]$ étant la classe fondamentale de $V$ et $$f_*:H_n(V;\Z)\to H_n(\pi;\Z)$$ étant l'application induite par $f$ au niveau des groupes d'homologie;
\item $g$ une métrique riemannienne lisse par morceaux sur $V$.\\
\end{itemize}
\end{quote}

Les définitions précises des objets qui interviennent ci-dessus sont rappelées au paragraphe \ref{vocabulaire}. Une représentation $(V,f,g)$ de $h$ sera dite \textit{normale} lorsque $f$ induit au niveau des groupes fondamentaux un épimorphisme. On peut noter que toute classe entière est représentable par un cycle géométrique et cette représentation peut être normalisée (voir \cite{Babenko-TSU}). \\

Soient $(V,f,g)$ un cycle géométrique représentant $h$ et $v\in V$. On note $\sys_v(V,f,g)$ la longueur du plus petit lacet $c$ de point initial $v$ dans $V$ tel que le lacet $f\circ c$ soit non contractile dans $\K$. On définit encore \textit{la systole relative} de $(V,f,g)$ par :
$$\sys(V,f,g)=\underset{v\in V}{\inf}\:\sys_v(V,f,g)$$

Le \textit{volume systolique relatif} de $(V,f,g)$ est alors : $$\sigma(V,f,g)=\dfrac{\vol(V,g)}{\sys(V,f,g)^n}.$$

Cela permet de définir \textit{le volume systolique de $h$} notée $\sigma(h)$, qui est l'infimum des $\sigma(V,f,g)$ lorsque $(V,f,g)$ décrit la collection des cycles géo\-mé\-triques qui représentent $h$. 

Lorsque $h\neq 0$, selon Gromov (voir \cite{Gromov-FRM}, § 6), on a $\sigma(h)>0$. Cependant on ne sait pas si cet infimum est réalisé, ni quelle est la structure des pseudo-variétés qui le réalisent éventuellement. Dans le cas où la classe $h$ est réalisable par une variété, on sait qu'il coïncide avec le volume systolique de n'importe quelle représentation normale par une variété de $h$, voir \cite{Babenko-TSU}, \cite{Babenko-ATSU} et \cite{Brunnbauer-HI}. Rappelons que le volume systolique d'une variété compacte $M$ de dimension $n$ est donné par :
$$\sigma(M)=\underset{g}{\inf}\dfrac{\vol(M,g)}{\sys(M,g)^n},$$
où l'infimum est pris sur toutes les métriques riemanniennes sur $M$ et où $\sys(M,g)$ désigne la longueur du plus petit lacet non contractile dans $(M,g)$. 
On sait aussi, d'après un résultat de Babenko et Balacheff (voir \cite{Babenko-Balacheff-DVS}), que toute classe entière $h$ dans $H_n(\pi;\Z)$ admet une représentation normale par une pseudo-variété admissible $V$ (i.e. telle que tout élément de $\pi_1(V)$ peut être représenté par un lacet qui ne rencontre pas le lieu singulier de $V$) et qu'alors le volume systolique de $h$ est donné par :
$$\sigma(h)=\underset{g}{\inf}\dfrac{\vol(V,g)}{\sys(V,f,g)^n},$$
où l'infimum est pris sur toutes les métriques polyédrales sur $V$.\\
Pour un panorama de la géométrie systolique, on peut consulter \cite{Katz-SG}.\\

Le but de ce texte  est de proposer une démonstration détaillée du résultat suivant, dû à {Gromov} (voir \cite{Gromov-FRM} page 71), dont la preuve existante est très lacunaire.

\begin{theoremalph}
\label{theoregularisation}
Soient $\pi$ un groupe de présentation finie et $h$ une classe d'homologie entière non nulle dans $H_n(\pi;\Z)$. Pour tout $\eps\in]0,\tfrac12\sys(V,f,g)[$, il existe un cycle géométrique $(V,f,g)$ représentant $h$ tel que :
\begin{enumerate}
	\item $\sigma(V,f,g)\le\sigma(h)+\eps$.
	\item Pour $R\in [\eps,\tfrac 12\:\sys(V,f,g)]$, les boules $B(R)$ de rayon $R$ dans  $V$ vérifient :
\begin{equation}
\label{eqvolboule}
		\vol (B(R))\ge A_n R^n
\end{equation}
pour une certaine constante universelle $A_n$, qui ne dépend que de la dimension de $h$.
\end{enumerate}
Un tel cycle géométrique est dit $\eps$-régulier\index{Cycle géométrique régulier}.
\end{theoremalph}

Pour illustrer de manière élémentaire ce théorème A, donnons une idée de la manière dont l'inégalité (\ref{eqvolboule}) sur le volume des boules permet de préciser la topologie des cycles géométriques $\eps$-réguliers. 

Soit $(V,f,g)$ un cycle $\eps$-régulier qui représente une classe d'homologie non triviale dans $H_n(\pi;\Z)$. Considérons un système maximal $B_1,\ldots,B_N$ de boules ouvertes disjointes de $V$ de rayon $R_0=\tfrac 1{2}\:\sys(V,f,g)$. Les boules concentriques $2B_1,\ldots,2B_N$ de rayon $2R_0$ recouvrent $V$. Appelons $\mathcal N$ le nerf de ce recouvrement\index{Nerf d'un recouvrement}. Il s'agit du complexe simplicial construit de la manière suivante :

\begin{quote}
\begin{itemize}
	\item Les sommets $p_1,\ldots,p_N$ de $\mathcal N$ sont identifiés avec les boules du recouvrement ;
	\item Deux sommets $p_i$ et $p_j$ sont reliés par une arête lorsque $2B_i\cap 2B_j\neq \emptyset$;
	\item Les sommets $p_{i_0},\ldots ,p_{i_p}$ forment un simplexe de dimension $p$ de $\mathcal N$ lorsque :
	$$2B_{i_0}\cap\cdots\cap 2B_{i_p}\neq \emptyset.$$
\end{itemize}
\end{quote}

On peut alors borner le nombre $N_k$ de simplexes de dimension $k$ de $\mathcal N$ en fonction du volume systolique $\sigma(h)$ de la classe $h$. Par exemple, on a $$\vol(V,g)\ge \dsum_{i=1}^{N_0}\vol (B_i)\ge N_0 A_n R_0^n,$$
ce qui permet de borner le nombre de sommets $N_0$ de $\mathcal N$ en fonction de $\sigma(h)$.\\

Gromov utilise le théorème A afin de relier des propriétés topologiques de $h$ au volume systolique. Plus précisément, ces cycles réguliers permettent à Gromov d'obtenir des inégalités entre le volume systolique et deux invariants topologiques importants de la classe $h$, qui sont :

\begin{itemize}
	\item La hauteur simpliciale $h_s(h)$ de $h\in H_m(\pi;\Z)$, qui est le nombre minimal de simplexes de toute dimension d'un cycle géométrique qui représente $h$;
	\item Le volume simplicial\index{Volume simplicial} $\norm{h}_\Delta$, défini comme l'infimum des sommes $\dsum \mod{r_i}$ sur toutes les représentations de $h$ par des cycles singuliers réels $\dsum r_i\sigma_i$.\\
	\end{itemize}

Gromov a notamment obtenu les résultats suivants (voir \cite{Gromov-FRM}, théorème 6.4.C'' et théorème 6.4.D' et \cite{Gromov-Actes}, paragraphe 3.C.3).\\

\begin{theo}[Gromov]
\label{theoGromovIllus}
Soit $\pi$ un groupe de présentation finie, $h\in H_m(\pi;\Z)$, une classe d'homologie non nulle de dimension $m\ge 2$.
\begin{enumerate}
	\item Il existe deux constantes positives $C_m$ et $C_m'$, qui ne dépendent que de $m$, telles que :
	$$\sigma(h)\ge C_m\dfrac{h_s(h)}{\exp(C_m'\sqrt {\ln h_s(h)})}.$$
\item	Il existe une constante positive $C_m''$ qui ne dépend que de la dimension $m$ telle que :
$$\sigma(h)\ge C_m''\dfrac{\norm{h}_\Delta}{(\ln (2+\norm{h}_\Delta))^m}.$$
\end{enumerate}
\end{theo}

Le théorème A a notamment encore  été utilisé dans \cite{Sabourau-SVME}, page 168, afin de déterminer une borne supérieure sur la $H$-entropie de ces cycles. Avant de donner le résultat obtenu, précisons la définition de la $H$-entropie d'un cycle géométrique $(V,f,g)$ qui représente une classe d'homologie non triviale dans $H_n(\pi;\Z)$. L'application $f$ induit un morphisme $f_*:\pi_1(V)\to \pi$ au niveau des groupes fondamentaux. On note $H$ le noyau de $f_*$ et on considère le revêtement galoisien $\w V\to V$ associé au sous-groupe distingué $H$ de $\pi_1(V)$. Notons encore  $\w g$ la métrique induite par celle de $V$ sur  $\w V$ et fixons $v_0$ dans $V$.  Soit $\w v_0$ un relevé de $v_0$ dans $\w V$. L'entropie volumique de $(V,f,g)$ associée à $H$ (ou $H$-entropie) est alors :
$$\text{Ent}_H(V,f,g)=\underset{R\to +\infty}{\lim}\dfrac1R\:\log \vol_{\widetilde g}(B(\w v_0,R)).$$
Sabourau a  établi le résultat suivant.\\

\begin{theo}[Sabourau]
\label{theoSabourauIllus}
Pour tout cycle géométrique $(V,f,g)$ $\eps$-régulier qui représente une classe d'homologie non triviale dans $H_n(\pi;\Z)$, on a :
$$\text{\normalfont{Ent}}_H(V,g)\le \dfrac{1}{\beta\: \sys(V,f,g)}\log\left(\dfrac{\sigma(V,f,g)}{A_n\alpha^n}\right)$$
où $\alpha\ge \eps$, $\beta>0$, $4\alpha+\beta<\tfrac 12$ et la constante $A_n$ est donnée par le théorème \normalfont{A}.
\end{theo}


\subsection{Idée générale de la démonstration}

Pour démontrer le théorème A, on suivra la procédure donnée par Gromov dans \cite{Gromov-FRM}, qui est la suivante. On part d'un cycle géométrique $(V,f,g)$ représentant la classe d'homologie $h$ dans $H_n(\pi;\Z)$ tel que, pour $\eps_1>0$, on ait :
$$\sigma(V,f,g)\le\sigma(h)+\eps_1.$$
Le but est alors de couper les \og{}mauvaises boules\fg{} dans ce cycle. Mais il n'y a pas de procédure constructive pour faire cela. 
On va, dans un premier temps, plonger $V$ dans un espace $K(V)$ (appelé extension du cycle $V$) qui possède de belles propriétés :

\begin{quote}
	\begin{itemize}
		\item On y contrôle la systole relative de certains cycles  géo\-métriques qui représentent $h$ ;
		\item Les cycles dans $K(V)$ vérifient une bonne inégalité iso\-péri\-métrique (voir théorème \ref{theo-inégalitéisocomplexes}), qui est une conséquence de l'inégalité de remplissage de Gromov dans les espaces de Banach (voir \cite{Gromov-FRM}, \cite{Wenger-SP} ou \cite{Guth-Note}).\\
	\end{itemize}
\end{quote}

La première partie de ce texte est consacrée à la construction de l'espace $K(V)$. Rappelons que si $X$ est un espace métrique compact, l'application $x\mapsto \dist(.,x)$ est un plongement isométrique de $X$ dans l'espace de Banach $\mathcal B(X,\R)$ des fonctions bornées sur $X$, muni de la norme de la convergence uniforme (c'est le plongement de Kuratowsky). La construction de l'image $V'$ de $V$ dans $K(V)$ est une variante de ce plongement. Cependant l'extension $K(V)$, qui est compacte, permettra beaucoup plus de souplesse que de travailler dans $\mathcal B(V,\R)$. \\

La suite de la démonstration consiste à montrer le résultat intermédiaire suivant.

\begin{theoremalph}
\label{theoB}
Soient $h$ une classe d'homologie non nulle dans $H_n(\pi;\Z)$, $\eps>0$ et $(V,f,g)$ un cycle géométrique de dimension $n\ge 2$ qui représente $h$ tel que $\sigma(V,f)\le \sigma(h)+\eps$. Soit $a$ dans $]0,\tfrac 12\:\sys(V,f,g)[$. Il existe une suite de cycles géométriques $(V_i,f_i,g_i)$ dans $K(V)$ qui représentent $h$ tels que :
\begin{enumerate}
	\item $\vol(V_i)\underset{i\to +\infty}{\longrightarrow}\vol[V']$ (où $V'$ est l'image de $V$ dans $K(V)$);
	\item Pour $R\in [a,\tfrac 12\:\sys(V,f,g)]$, les boules $B(R)$ de rayon $R$ dans chaque $V_i$ vérifient :
	$$\vol (B(R))\ge A_n (R-a)^n$$
	pour une certaine constante universelle $A_n$ qui ne dépend que de la dimension de $h$.
\end{enumerate}
\end{theoremalph}

Pour démontrer ce résultat, on considère une suite minimisante, pour le volume homologique de $V'$, de pseudo-variétés dans $K(V)$. De cette suite, par un argument de compacité, on montre que l'on peut en extraire la suite de cycles géométriques  $(V_i,f_i,g_i)$ du théorème \ref{theoB}.

Pour terminer la démonstration du théorème A, il ne reste plus qu'à remplacer, dans le second point du théorème \ref{theoB}, $\sys(V,f,g)$ par $\sys(V_i,f_i,g_i)$.

\subsection{Pseudo-variétés et volumes des chaînes}
\label{vocabulaire}
Avant de commencer, rappelons rapidement la signification de quelques notions utilisées dans ce texte.
Pour plus de précisions, on peut consulter \cite{Spanier95}.\\ 

Une \textit{pseudo-variété}\index{Pseudo-variété} de dimension $n$ est un complexe simplicial fini $K$ tel que :

\begin{itemize}
	\item $\dim K=n$;
	\item Chaque simplexe dans $K$ est face d'un simplexe de dimension $n$ (homogénéité de la dimension);
	\item Chaque simplexe de dimension $n-1$ est face d'exactement deux simplexes de dimension $n$ (pas de bifurcation);
	\item Dès que $\sigma$ et $\tau$ sont deux simplexes distincts de dimension $n$ dans $K$, il existe une suite $\sigma_1=\sigma,\ldots,\sigma_p=\tau$ de simplexes de dimension $n$ dans $K$ tels que pour tout $i$ dans $\set{1,p-1}$ les simplexes $\sigma_i$ et $\sigma_{i+1}$ aient une face de dimension $n-1$ commune (forte connexité).\\
\end{itemize}

Il est équivalent de dire qu'un complexe simplicial $K$ est une pseudo-variété de dimension $n$ lorsque $K$ est de dimension $n$ et lorsqu'il existe un sous-complexe $\Sigma\subset K$ vérifiant les trois propriétés suivantes :

\begin{itemize}
	\item $\dim \Sigma \le n-2$;
	\item $K\setminus \Sigma$ est une variété topologique de dimension $n$ dense dans $K$;
	\item L'espace $K\setminus \Sigma$ est connexe.\\
\end{itemize}
Dans la suite, toutes les pseudo-variétés considérées sont supposées orientables.\\

Définissons maintenant la notion de \textit{métrique riemannienne sur un polyèdre}\index{Métrique polyédrale}. On suit pour cela \cite{Babenko-TSU} ou \cite{Babenko-FSI}. Soit $P$ un polyèdre (espace topologique muni d'une triangulation) fini.  Une métrique riemannienne sur $P$ est  une famille de métrique $(g_\sigma)_{\sigma\in \Sigma}$, où $\Sigma$ est l'ensemble des simplexes de $P$, qui vérifie :
\begin{quote}
\begin{itemize}
	\item Chaque $g_\sigma$ est une métrique riemannienne lisse à l'intérieur de $\sigma$ ;
	\item Dès que $\sigma_1$ et $\sigma_2$ sont dans $\Sigma$, on a l'égalité $$g_{\sigma_1}\mid_{ \sigma_1\cap \sigma_2}=g_{\sigma_2}\mid_{\sigma_1\cap \sigma_2}.$$
\end{itemize}
\end{quote}

Un polyèdre riemannien est alors un espace de longueur pour la distance induite par cette famille de métriques. Pour simplifier les notations, on désignera par une seule lettre $g$ la famille $(g_\sigma)_{\sigma\in \Sigma}$ et on dira que $g$ est une métrique riemannienne lisse par morceaux sur $P$. Le volume $\vol(P,g)$ est alors correctement défini de manière évidente. \\

On aura besoin aussi de définir une notion de volume pour les cycles (lipschitziens) dans un espace métrique $(X,\dist)$ (voir \cite{Gromov-FRM}, page 11). On note  $\Delta^q\subset \R^{q+1}$  le $q$-simplexe standard de $\R^{q+1}$.
 
\begin{defi}
\label{def-volumecycle}Soit $\sigma: \Delta^{q}\to X$ un simplexe (lipschitzien) dans $X$. Le volume de $\sigma$ dans $(X,\dist)$ est l'infimum des volumes de $\Delta^q$ muni d'une métrique riemannienne $\mathcal G$ telle que $\sigma$ soit une contraction de $(\Delta^q,\dist_{\mathcal G})$ sur $(X,\dist)$, où $\dist_{\mathcal G}$ est la distance induite par $\mathcal G$.

Lorsque $c=\dsum_{i\in I}k_i\sigma_i$ est une $q$-chaîne singulière\index{Volume des chaînes} ($I$ est une partie finie de $\N$, les $k_i$ sont des éléments de $\Z$), le volume de $c$ dans $(X,\dist)$ est alors :
$$\vol(c)=\dsum_{i\in I} \mod{k_i}\: \vol(\sigma_i).$$
\end{defi}

\begin{rema}

\begin{enumerate}
	\item Lorsque $A\subset X$, si $c$ est une chaîne dans $A$ alors son volume dans $(X,\dist)$ et dans $(A,\dist_A)$ coïncident, où $\dist_A$ est la restriction à $A^2$ de $\dist$.
	\item Supposons que $X$ soit un polyèdre, $\dist$ étant la distance induite par une métrique riemannienne $g$ sur $X$. Pour un cycle lipschitzien $\sigma:\Delta^q\to X$, le volume de $\sigma$ dans $(X,\dist)$ est alors $\vol(\sigma)=\vol(\Delta,\sigma^*g)=\vol(\sigma,g)$, où $\sigma^*g$ est la métrique \og{}tirée en arrière\fg{} sur $\Delta^q$ (rappelons que $\sigma$ est presque partout différentiable).
\end{enumerate}

\end{rema}
\begin{Rema-ess}

\label{rema-metrique}
La définition \ref{def-volumecycle} permet de définir le volume $\vol(c)$ d'une chaîne $c$ dans $(\R^N,\norm{\ }_\infty)$. Pour des sous-variétés de $\R^N$ cette notion de volume coïncide avec le volume hyper-euclidien, appelé aussi volume riemannien inscrit (voir \cite{Gromov-FRM} page 32). Précisons cela. 

Soit $V$ une sous-variété de dimension $q$ de $\R^N$. Le volume hyper-euclidien de $V$ est défini par :
	$$\vol_{he}(V)=\underset{g}{\inf} \vol(V,g),$$
l'infimum étant pris sur toutes les métriques riemanniennes $g$ sur $V$ telles que, sur chaque espace tangent, on ait $\norm{\ }_{g}\ge \norm{\ }_\infty$ (où $\norm{\ }_g$ est la norme euclidienne induite par $g$). Il existe d'ailleurs une unique métrique $g_{he}$ sur $V$pour laquelle cet infimum est atteint (voir par exemple \cite{Ivanov-volumes}). Ainsi $\vol_{he}(V)=\vol(V,g_{he})$.
 
Si maintenant $\sigma:\Delta^q\to V$ est un simplexe différentiable dont le jacobien est de rang maximum, alors son volume dans $(\R^N,\norm{\ }_\infty)$ n'est autre que $\vol(\sigma,g_{he})$.  
Puis la classe fondamentale de $V$ peut être représentée par un cycle $c_V=\dsum_{i=1}^p\sigma_i$ où $\sigma_1,\ldots,\sigma_p$ sont les simplexes de dimension $q$ de $V$, et il vient  :
$$\vol(c_V)=\dsum_{i=1}^p\vol(\sigma_i)=\dsum_{i=1}^p\vol(\sigma_i,g_{he})=\vol(V,g_{he}).$$

Cela est encore vrai lorsque $V$ est une pseudo-variété.\\ 

Dans la suite, on ne considérera que des cycles $c$ dans $\R^N$, pour un certain $N$ fixé, et $\vol(c)$ désignera le volume de $c$ dans $(\R^N,\norm{\ }_\infty)$. 
\end{Rema-ess}
\section{Extension des cycles géométriques}
\subsection{Complexes cubiques}
On va d'abord définir la notion de \textit{complexe cubique}, que l'on utilisera pour la construction de l'extension $K(V)$ du cycle géométrique $(V,f,g)$.
On considère un ensemble non vide $X$, ainsi que l'espace de Banach $\left(\mathcal B(X,\R),\norm{\ }_\infty\right)$ des fonctions bornées sur $X$.

\begin{defi} 
\label{def-face}On appellera cube standard sur $X$ l'ensemble :
$$Cube(X)=\left\{\varphi\in \mathcal B(X,\R)\mid \left(\forall x\in X\right)\left(0\le \varphi(x)\le 1\right) \right\}$$
Pour $x$ dans $X$ les \textit{faces associées à $x$} sont les parties suivantes de $Cube(X)$:
$$F_x^0=\left\{\varphi\in Cube(X)\mid \varphi(x)=0\right\}\ \ \mbox{et}\ \ F_x^1=\left\{\varphi\in Cube(X)\mid \varphi(x)=1\right\}.$$
\end{defi}

Remarquons que si $X$ est de cardinal 1 alors $Cube(X)$ est isométrique à $[0,1]$ et si $X$ est de cardinal $N$, $Cube(X)$ est isométrique à $[0,1]^N$. On peut définir une notion plus générale de \textit{face} pour $cube(X)$ de la manière suivante. Par exemple, pour $x$ dans $X$, il existe une isométrie $\psi_x:Cube(Y)\to F_x^0$ où $Y=X\setminus \left\{x\right\}$. Les images par $\psi_x$ des faces de $Cube(Y)$ seront encore appelées faces de $Cube(X)$. On peut d'ailleurs itérer cette définition tant que $Y$ est de cardinal supérieur à 1.

\begin{defi} Soit $E$ un espace métrique.  Lorsque $X$ est un ensemble, un \textit{cube sur $X$} dans  $E$ est une partie $K$ de $E$ telle qu'il existe une isométrie $\psi:Cube(X)\to K$; les \textit{faces} de $K$ sont alors les images par $\psi$ des faces de $cube(X)$.
On dira encore que $E$ est un \textit{complexe cubique}\index{Complexe cubique} lorsque l'on peut écrire $$E=\bigcup_{i\in I} K_i$$
où les $K_i$ (appelés \textit{briques} de $E$) sont des cubes sur un certain ensemble $X_i$ qui vérifient la contrainte suivante : pour  $i\neq j$ l'intersection  $K_i\cap K_j$ est soit une face (au sens généralisé) commune à $K_i$ et $K_j$, soit le vide. \\
Un \textit{sous-complexe cubique} d'un complexe cubique $K$ est un complexe cubique inclus dans $K$ dont les briques sont des faces de $K$.
\end{defi}

\begin{exem}
	\begin{enumerate}
		\item  Soient $m$ un entier relatif, $X$ un ensemble et $x\in X$. L'ensemble $K$
	des $\varphi \in \mathcal B(X,\R)$ telles que $0\le \varphi(x)\le 1$ et $m\le \varphi(y)\le m+1$ pour tout $y$ dans $X\setminus \left\{x\right\}$ est un cube sur $X$. En effet, c'est l'image isométrique de $Cube(X)$ via l'application $\Phi:\varphi\mapsto \psi $
	où $\psi(x)=\varphi(x)$ et $\psi(y)=\varphi(y)+m$ pour tout $y$ dans $X\setminus \left\{x\right\}$.
	\item $[0,1]\times \left\{0\right\}$ est un cube sur $Y=0$ dans $\R^2$ et c'est une face du cube (sur $X=\left\{\ast,\Box\right\}$) $[0,1]^2$.
	\item Soit $X$ un ensemble non vide. Pour $x$ dans $X$, l'ensemble $F$
	des $\varphi \in \mathcal B(X,\R)$ telles que $\varphi(x)=0$ et $m\le \varphi(y)\le m+1$ pour tout $y$ dans $X\setminus \left\{x\right\}$ est une face de $Cube(X)$.
	\item $\R^N$ est un complexe cubique sur tout ensemble $X$ de cardinal $N$\ldots
		\item Pour tout ensemble $X$, $\mathcal B(X,\R)$ est un complexe cubique sur $X$ : la famille d'hyperplans affines $\left(\left\{\varphi\in \mathcal B(X,\R)\mid \varphi(x)=m\right\}\right)_{(x,m)\in X\times \Z}$ découpe $\mathcal B(X,\R)$ en cubes sur $X$.
	\item Lorsque $X$ est un espace topologique, toutes les constructions précédentes sont encore valables si on remplace le Banach $(\mathcal B(X,\R),\norm{\ }_\infty)$ par le Banach $(L^\infty(X),\norm{\ }_\infty)$ des fonctions boréliennes bornées sur $X$.
	\end{enumerate}
\end{exem}

\begin{rema}
\label{rema-topologiecomplexe}
Dans la suite, quelque soit le complexe cubique inclus dans un espace $L^\infty$ considéré, on supposera qu'il est muni de la métrique de longueur associée à la distance induite par $\norm{\ }_\infty$. En particulier, pour $\varphi$ et $\psi$ dans un tel complexe cubique, on a aura $\norm{\varphi-\psi}_\infty\le \dist(\varphi,\psi)$. 
\end{rema}

On aura besoin par la suite du résultat suivant.

\begin{theo}
\label{theo-retraction}
Soit $\eps$ dans  $]0,\tfrac 12[$. Pour tout complexe cubique $K$, il existe une application affine par morceaux $R_\eps:K\to K$ qui envoie le $\eps$-voisinage de tout sous-complexe cubique $K'$ de $K$ sur $K'$.
\end{theo}
\begin{proof}[Démonstration]
Regardons ce qui se passe pour le complexe cubique $[0,1]$. On prend l'application $r_\eps:[0,1]\to [0,1]$ dont le graphe est donné par la figure \ref{fig-rétraction}.


\begin{figure}[h]
\begin{center}
\includegraphics[width=0.4\linewidth]{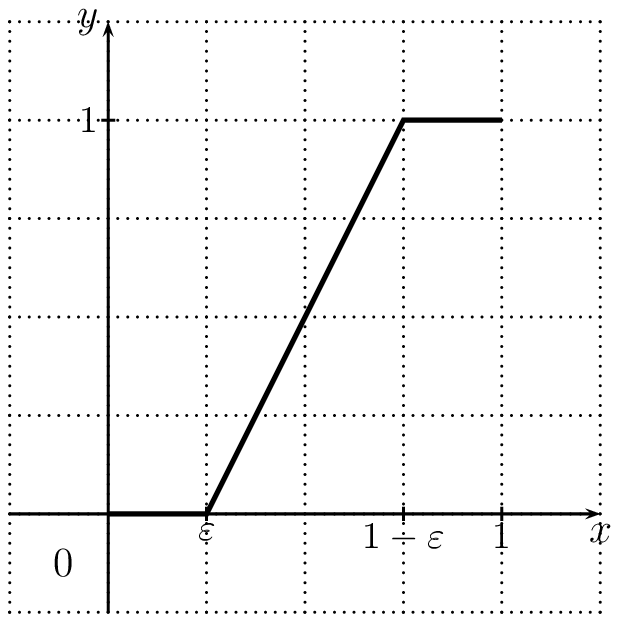}
\caption{L'application $r_\eps$} 
\label{fig-rétraction}
\end{center}
\end{figure}

Si maintenant $K$ est le cube $[0,1]^N$, on prend $R_\eps(x)=(r_\eps(x_1),\ldots,r_\eps(x_N))$. De manière plus générale, si $X$ est un ensemble non vide et si $K=Cube(X)$, on prend $$R_\eps:\varphi\mapsto r_\eps\circ \varphi.$$
Dans le cas le plus général où $K=\displaystyle \bigcup_{i\in I} K_i$, on note $\psi_i:K_i\to cube(X_i)$ une isométrie. Dès que $\varphi\in K_i$, on pose alors : $R_\eps(\varphi)=r_\eps\circ \psi_i(\varphi)$.
\end{proof}

\subsection{Des complexes cubiques particuliers}
\label{subsection-complexecubique}
On  considère un cycle géo\-métrique $(V,f,g)$ ainsi qu'une partie finie $V_0=\left\{v_1,\ldots,v_N\right\}$ de $V$. Pour tout $i$ dans $\left\{1,\ldots,N\right\}$ on considère l'application $x_i: V\to [0,1]$ définie par :
$$x_i(v)=\min(\dist(v,v_i),1)$$
ainsi que l'application $I_0:V \to \R^N$ définie par :
$$I_0(v)=(x_1(v),\ldots,x_N(v))$$
\begin{rema}  Pour $\eta>0$, on peut trouver $V_0$ suffisamment dense dans $V$ tel que, pour tout $(v,v')\in V^2$ avec $\dist(v,v')<\tfrac 1 2$, on ait :
$$\dist(v,v')-\eta\le \norm{I_0(v)-I_0(v')}_\infty\le \dist(v,v').$$
Lorsque $V$ est une variété riemannienne, on peut conclure (voir \cite{Guth-Note}) que : 
$$(1-\eta)\dist(v,v')\le \norm{I_0(v)-I_0(v')}_\infty\le \dist(v,v').$$
\end{rema}

Construisons maintenant un complexe cubique à partir de $V$ et d'une partie finie $V_0$ de $V$ de cardinal $N$. Pour tout $v$ dans $V$ notons  $K(v)$ la face de plus petite dimension de $[0,1]^N$ qui contient $R_\eps\circ I_0(v)$. 
Par exemple si $N=3$, en notant $J=R_\eps\circ I_0$ on a  :
\begin{quote}

\begin{itemize}
	\item  Si $J(v)=(\ast,0,0)$ avec $\ast\in [\eps,1-\eps]$ alors $K(v)=[0,1]\times \left\{0\right\}\times \left\{0\right\}$. 
\item Si $J(v)=(\ast,\ast',0)$ avec $\ast\in [\eps,1-\eps]$ et $\ast'\in [\eps,1-\eps]$ alors $K(v)=[0,1]\times [0,1] \times \left\{0\right\}$. 
\end{itemize}

\end{quote}

On prend maintenant l'union de toutes ces faces $K(v)$, pour $v$ décrivant $V$. 

\begin{defi}
Le  sous-complexe cubique de $[0,1]^N$ constitué de l'union de tous les cubes $K(v)$ pour $v$ décrivant $V$ s'appelle \textit{une extension du cycle géométrique}\index{Extension d'un cycle géométrique} $(V,f,g)$. Ce complexe cubique sera noté $K(V)$ :
$$K(V)=\bigcup_{v\in V}K(v).$$
\end{defi} 

\begin{rema}
Ce complexe cubique $K(V)$ dépend de $V$, mais aussi de $V_0$ et $\eps<\tfrac1 2$. Il sera intéressant lorsque $V_0$ est $\alpha$-dense dans $V$ pour $\alpha$ suffisamment petit, où $\alpha$-dense\index{Partie $\alpha$-dense} signifie que tout point de $V$ se trouve à une distance inférieure à $\alpha$ d'un point de $V_0$. \\
\end{rema}

Lorsque $V_0=\left\{v_1,\ldots,v_N\right\}$ est suffisamment dense dans $V$, $K(v)$ est un complexe (cellulaire) de dimension au plus $N-1$. 

Plus précisément, pour $i$ dans $\left\{1,\ldots,N\right\}$, notons $K_i$ le cube défini par $x_i=0$ i.e. :
$$K_i=\left\{(x_1,\ldots,x_N)\in [0,1]^N\mid x_i=0\right\}.$$

Chaque $K_i$ contient $J(v_i)$. Si maintenant $V_0$ est $\alpha$-dense dans $V$ avec $\alpha\le \eps$, pour tout $v$ dans $V$ il existe alors $j$ dans $\left\{1,\ldots,N\right\}$ tel que $\dist(v,v_j)\le \eps$. Il en résulte que $J(v)\in K_j$ et ainsi : $$K(V)\subset \bigcup_{i=1}^N K_i.$$

L'espace $K(V)$ est donc inclus dans une union de faces de dimension $N-1$ de $[0,1]^N$. Mieux, sous les conditions précédentes, dès que $m$ est un point de $K(V)$, $m$ admet au moins une coordonnée nulle.

\begin{rema}
\label{rema-complexecubique}
\begin{enumerate}
	\item Avec les notations ci-dessus, si $v$ et $v'$ dans $V$ sont tels que $J(v)$ et $J(v')$ vivent dans un même cube $K_i$, pour un certain $i$ dans $\left\{1,\ldots,N\right\}$, alors on a $\dist(v,v')\le 2\eps$ et, en particulier, $\dist(v,v')< 1$.
	\begin{quote}
	En effet, puisque $J(v)$ et $J(v')$ sont dans $K_i$, on a  $$r_\eps\big(\min(1,\dist(v,v_i))\big)=0=r_\eps\big(\min(1,\dist(v',v_i))\big),$$
ce qui impose $\dist(v,v_i)\le \eps$ et $\dist(v',v_i)\le \eps$; il vient alors $\dist(v,v')\le 2\eps <1$. \\
	\end{quote}

\item Lorsque $V_0$ est  une partie infinie de $V$, la même construction s'adapte. Pour $v$ dans $V$, $K(v)$ est alors la plus petite face de $Cube(V_0)$ qui contient $J(v)$. Dans ce cas $K(V)$ est une union de faces de $Cube(V_0)\subset\mathcal B(V_0,\R)$, et, lorsque $V_0$ est suffisamment dense dans $V$, chaque $K(v)$ est contenu dans une face $F_w^0$ pour un certain $w$ dans $V_0$ (pour les notations, voir la définition \ref{def-face} page \pageref{def-face}).
\end{enumerate}
\end{rema}

\begin{rema}
Dans \cite{Gromov-FRM} § 6.2, Gromov a introduit la notion de $\delta$-complexe cubique pour $\delta>0$ : dans la définition de $I_0$, on remplace $x_i(v)$ par $\min(\dist(v,v_i),\delta)$. Le complexe cubique obtenu $K_\delta(V)\subset [0,\delta]^N$  dépend alors en plus du choix de $\delta$.
\end{rema}
\subsection{Plongement d'un cycle géométrique dans son extension}
Considérons maintenant l'application $J:V\to K(V)$
définie par 
\begin{equation}
	J=R_\eps\circ I_0
\end{equation}
On peut noter que $J$ est lipschitzienne de rapport $\dfrac{1}{1-2\eps}$.

\begin{exem} Prenons pour $V$ un cercle de périmètre $1$ et $V_0=\left\{\tfrac 14,\tfrac 12,\tfrac 34\right\}$. Ainsi $V_0$ est $\alpha$-dense dans $V$ pour $\alpha=\tfrac 14$. 
La figure \ref{fig-cercle1} montre alors $J(V)$ en gras ainsi que $K(V)$ qui est hachuré, pour $\eps=\tfrac 14$.


\begin{figure}[h]
\begin{center}
\includegraphics[width=0.6\linewidth]{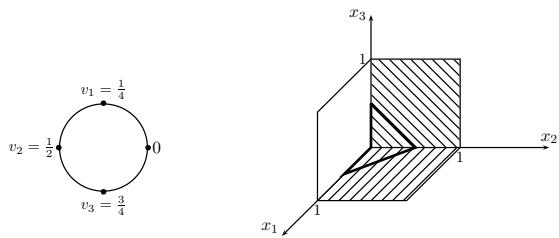}
\caption{Plongement du cercle $V$ de périmètre 1 dans $K(V)$.}
\label{fig-cercle1}
\end{center}
\end{figure}

En prenant pour $V$ un cercle de périmètre 2 et $V_0=\left\{0,\tfrac 23,\tfrac 43\right\}$ on obtient alors la figure \ref{fig-cercle2}, où l'image de $V$ et $K(V)$ sont confondues (avec $\eps=\tfrac 13$).


\begin{figure}[h]
\begin{center}
\includegraphics[width=0.5\linewidth]{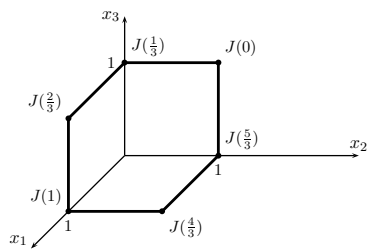}
\caption{Plongement du cercle $V$ de périmètre 2 dans $K(V)$.} 
\label{fig-cercle2}
\end{center}
\end{figure}

\end{exem}

Ces exemples élémentaires suggèrent que $J:V\to K(V)$ est un plongement, lorsque $V_0$ est suffisamment dense dans $V$.

\begin{theo}
Pour $\eps$ suffisamment petit et $V_0$ suffisamment dense dans $V$, l'application $J$ est injective.
\end{theo}

\begin{proof}[Démonstration]
Soient $v$ et $v'$ dans $V$. Remarquons que, si $J(v)=J(v')$, alors, pour tout $i$ dans $\set {1,\ldots,N}$ tel que $\dist(v,v_i)\in [\eps,1-\eps]$, on a $\dist(v_i,v)=\dist(v_i,v')$. Supposons que $\dist(v,v')\ge 2\eps$. Il existe  $v_i$  dans $V_0$ tel que $\dist(v_i,v)\le \alpha$. Si $0<\alpha<\eps<\tfrac12$ on a immédiatement $\dist(v',v_i)>\eps$ et ainsi $r_\eps(x_i(v))=0\neq r_\eps(x_i(v'))$. Il en résulte que $J(v)\neq J(v')$.
  
Supposons maintenant que $V$ soit un simplexe euclidien de dimension $n$ et qu'il existe deux points distincts $v$ et $v'$ dans $V$ qui vérifient $J(v)=J(v')$. Si $w$ est un point de  $V_0$ tel que $\dist(v,w)\in [\eps,1-\eps]$ on a  $\dist(v,w)=\dist(v',w)$, donc tous les points de $V_0$ situés à une distance comprise entre $\eps$ et $1-\eps$ de $v$ appartiennent alors à l'hyperplan médiateur de $v$ et $v'$. C'est une contradiction lorsque $\eps$ est suffisamment petit et $V_0$ suffisamment dense dans $V$.


\begin{figure}[h]
\begin{center}
\includegraphics[width=0.5\linewidth]{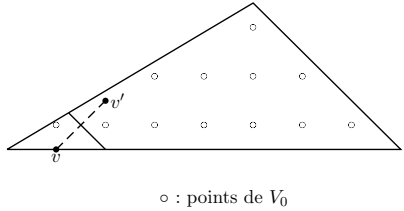}
\caption{Plongement d'un simplexe.} 
\end{center}
\end{figure}


Supposons maintenant que $V$ soit un polyèdre euclidien. Soient  $v$ et $v'$ deux points distincts dans $V$ qui vérifient $\dist(v,v')\le 2\eps$. Appelons $\sigma$ un simplexe de dimension $n$ contenant $v$. L'ensemble des points $w$ de $\sigma$ tels que $\dist(v,w)=\dist(v',w)$ est de dimension inférieure à $n-1$. Ainsi, si on peut trouver $w$ dans $\sigma$ tel que $\dist(w,v)=3\eps$ et $B(w,2\eps)\cap \sigma$ soit non vide, pour $V_0$ suffisamment dense dans $V$, il y aura dans cette boule un point de $V_0$ qui distinguera $v$ et $v'$. Mais pour $\eps$ suffisamment petit, on peut toujours trouver une telle boule.\\

Enfin, par exemple d'après \cite{Cheeger2}, lorsque $V$ est quelconque on peut l'approcher par des polyèdres euclidiens tout en gardant systole relative et volume aussi proche que l'on veut.
\end{proof}

\begin{Rema-ess}
\label{rem-essentielle}
Désignons par $\psi:V'\to V$ l'homéomorphisme réciproque de $J:V\to V'=J(V)$. On a alors, au niveau des groupes d'homologie de dimension $n$ :
$$f_*\circ \psi_*[V']=h.$$
Ainsi, si $g_0$ est la métrique sur $V'$ fournie par la remarque \ref{rema-metrique}, $(V',f\circ\psi,g_0)$ est un cycle géométrique qui représente $h$. 
$$\xymatrix{  V' \ar[rr]^{\psi} \ar[rd]_{f'}&&  V \ar[ld] ^{f}\\ &  K(\pi,1) }$$

Notons, pour la suite, $f'=f\circ\psi$. 
\end{Rema-ess}

\subsection{Une inégalité isopérimétrique dans $K(V)$}
Les cycles des complexes cubiques satisfont à une inégalité isopérimétrique remarquable, qui jouera un rôle essentiel dans la démonstration du résultat principal. Avant d'énoncer ce résultat, il est nécessaire de préciser certaines notions.

Lorsque $z$ est un cycle de dimension $n$ dans un espace métrique, le \textit{volume de remplissage} de $z$ dans $X$ (comparer à \cite{Gromov-FRM} page 12) est :
$$\volremp(z\subset X)=\inf \left\{\vol(c)\mid \mbox{$c$ chaîne de dimension $n+1$ dans $X$
 de bord $z$}
\right\}$$
\begin{theo}
\label{theo-inégalitéisocomplexes}
Soit $K(V)$ une extension d'un cycle géométrique $(V,f,g)$. Il existe deux constantes $\alpha_n$ et $\beta_n$ telles que tout cycle $z$ dans $K(V)$ de dimension $n$ et de volume $\vol(z)\le \alpha_n$ soit le bord d'une chaîne $c$ dans $K(V)$ de dimension $n+1$ qui vérifie :
\begin{enumerate}
	\item $\vol(c)\le \beta_n\:(\vol\:z)^{\frac{n+1}{n}}$ donc $\volremp (z)\le \beta_n\:(\vol\:z)^{\frac{n+1}{n}}$.
	\item $c$ est contenue dans le voisinage tubulaire de rayon $\eps_n=\beta_n\:(\vol\:z)^{\frac 1n}$ de $z$.
\end{enumerate}
\end {theo}

\begin{proof}[Démonstration]
Soit $z$ un cycle singulier de dimension $n$ dans $K(V)\subset \R^N$. 
Un résultat de Gromov (voir \cite{Gromov-FRM} \textbf{4.2} et \textbf{4.3}), dont une preuve détaillée se trouve dans dans \cite{Guth-Note}, assure l'existence d'une constante $C_n$ (ne dépendant que de la dimension du cycle $z$) et d'une chaîne $c_1$  dans $\R^N$ telle que $\partial c_1=z$ qui vérifie :
\begin{quote}
\begin{enumerate}
	\item $\vol(c_1)\le C_n\:(\vol\:z)^{\frac{n+1}{n}}$;
	\item $c_1$ est contenue dans le voisinage tubulaire $\tub(z,\eps_n)$ de rayon $\eps_n=(n+1)C_n\:(\vol\:z)^{\frac 1{n}}$ de $z$ dans $\R^N$.
\end{enumerate}
\end{quote}

On peut aussi comparer ce résultat à l'énoncé donné dans \cite{Gromov-MS} page 266.
Soit $\alpha_n>0$ tel que la relation $\vol(z)\le \alpha_n$ implique que $\eps=\eps_n=(n+1)C_n\:(\vol\:z)^{\frac 1{n}}<\tfrac 14$. On peut prendre par exemple :

\begin{equation}
	\label{alpha}
	\alpha_n=\dfrac1{4^{n}\left(n+1\right)^n C_n^n}
\end{equation}
 On peut alors considérer la rétraction $R_{\eps}$ du théorème \ref{theo-retraction}, pour le complexe cubique $\R^N$, qui fournit une chaîne $c_2=R_{\eps}(c_1)$ de dimension $n+1$ dans $K(V)$. Mais on a $\partial R_\eps(c_1)=R_{\eps}(\partial c_1)=R_{\eps}(z)$, donc cette chaîne ne convient pas tout à fait.

Notons que, pour tout $t$ dans $\R$, on a $\mod{r_\eps(t)-t}\le\eps$. Ainsi, pour tout $x$ dans $\R^n$, $\norm{R_\eps(x)-x}\le \eps$. On considère, pour tout $x$ dans $z$, le segment $s_x$ de $\R^N$ d'extrémités $x$ et $R_\eps(x)$ : ce segment est de longueur inférieure à $\eps$. De plus, si $x$ est dans une face de $K(v)$ alors $R_\eps(x)$ est dans cette même face : il en va donc de même du segment $s_x$. On considère alors le cylindre $\mathcal C$ constitué par $z$, $R_\eps(z)$, ainsi que les segments $s_x$, pour $x$ décrivant $z$. Ce cylindre est une chaîne de dimension $n+1$ dans $K(V)$ dont le bord est $\partial \mathcal C=z-R_{\eps}(z)$. Son volume  vérifie l'inégalité grossière :
$$\vol(\mathcal C)\le \eps(\vol(z)+\vol(R_\eps(z))).$$
On pose maintenant $c=c_2+\mathcal C$. C'est une chaîne de dimension $n+1$ dans $K(V)$. On a :
$$\partial c=\partial c_2+\partial\mathcal C=z.$$
De plus, $R_\eps$ étant lipschitzienne de rapport $\frac 1{1-2\eps}\le 2$, il vient :
\eq
\vol(c)&\le& 2^{n+1}\vol(c_2)+\vol(\mathcal C)\\
&\le &C_n(\vol\:z)^{\frac {n+1}n}+(n+1)C_n(\vol\:z)^{\frac 1n}\left(\vol\:z+ 2^{n}\vol\:z\right)\\
&\le &C_n\left(2^{n+1}+(n+1)(1+2^{n})\right)(\vol\:z)^{\frac {n+1}n}.
\fineq
Ainsi $\vol(c)\le \beta_n\:(\vol\:z)^{\frac{n+1}{n}}$, où

\begin{equation}
\label{beta}
	\beta_n=C_n\left(2^{n+1}+(n+1)(1+2^{n})\right)
\end{equation}

Enfin, tout point de $\mathcal C$ est à distance inférieure à $\eps$ de $z$. Si maintenant $x$ est dans $c_1$ alors :
\eq
\dist(R_{\eps}(x),z)&\le& \dist(R_{\eps}(x),x)+\dist(x,z)\\
&\le &\eps +\eps=2\eps=2C_n(\vol\:z)^{\frac 1n}\\
&\le &\beta_n\vol(z)^{\frac 1n}
\fineq
Il en résulte que $c$ est contenu dans le voisinage tubulaire de rayon $\eps=\beta_n(\vol\:z)^{\frac 1n}$ de $z$ dans $K(V)$.
\end{proof}

\begin{rema}
Le même type de démonstration s'applique à un complexe cubique quelconque.
\end{rema}

\section{Lien avec les systoles relatives}
\subsection{Une autre définition de la systole relative}
Avant de poursuivre, on a besoin de quelques précisions sur les revêtements. 
On considère toujours un cycle géométrique $(V,f,g)$ ainsi que le revêtement galoisien $p:\w V\to V$ associé au sous-groupe distingué $\ker f_*$, où $f_*:\pi_1(V)\to \pi$ est l'application induite par $f$ au niveau des groupes fondamentaux (pour plus de détails, voir \cite{Hatcher}). Notons $q:\w {K(\pi,1)}\to \K$ le revêtement universel de $\K$; l'application $f:V\to \K$ est recouverte par une application continue $\w f:\w V\to \w {K(\pi,1)}$ telle que le diagramme suivant commute.

$$\begin{CD}
\w V @>\w f>>\w {K(\pi,1)} \\ @V p VV @VV q V\\ V@>>f> \K
\end{CD}$$

On notera $\Gamma=G(\w V)$ le groupe des automorphismes de ce revêtement, que l'on peut considérer comme un sous-groupe de $\pi$, puisqu'isomorphe à $\im f_*$.  Il existe  une unique distance $\w \dist $ sur $\w V$ pour laquelle $\Gamma$ agit par isométrie sur $\w V$ et telle que $f$ soit une isométrie locale (voir \cite{BBI} page 84). On a alors le résultat suivant.

\begin{theo}
\label{theo-defsystole}
Avec les notations ci-dessus :
\begin{equation}
\label{sysdep}
	\sys(V,f,g)=\inf \left\{\w\dist(\w v,\gamma\cdot\w v)\mid\mbox{$\w v\in\w V$ et $\gamma\in \Gamma \setminus\left\{1_\Gamma\right\}$}\right\}
\end{equation}
\end{theo}

\begin{proof}[Démonstration]
Prenons $\w v\in \w V$ et $\gamma$ dans ${\Gamma} \setminus \left\{1_{\Gamma} \right\}$. On pose $v=p(\w v)$. Prenons $\w c$ un chemin minimisant entre $\w v$ et $\gamma\cdot \w v$. On a donc :
$$\w \dist(\w v,\gamma\cdot\w v)=\Long(\w c)=\Long(c)$$
où $c$ est le lacet  $p\circ \w c$ de point base $v$. Montrons alors que $f\circ c$ n'est pas homotope à un point dans $\K$. Le lacet $f\circ c$ n'est autre que le lacet $q\circ \w f\circ \w c$. Mais $\w c(1)=\gamma\cdot \w v$ donc, $\w f$ étant équivariante :
$$\w f(\gamma\cdot \w v)=\gamma\cdot \w f(\w v).$$
Comme $\gamma\neq 1_\Gamma$, on a $\w f(\w v)\neq \gamma\cdot \w f(\w v)$ donc $f\circ c$ n'est pas homotope à un point.

Soit maintenant $c$ un lacet dans $V$ tel que $\Long(c)=\sys(V,f,g)$. On pose $v=c(0)$ et on prend un relevé $\w v$ de $v$ dans $\w V$. Il existe alors un unique chemin $\w c:[0,1]\to \w V$ tel que $\w c(0)=\w v$ et $p\circ \w c=c$. En notant $\gamma$ l'élément de  $\Gamma$ tel que $ \w v\cdot \gamma=\w c(1)$, on obtient :
$$\w \dist(\w v,\gamma\cdot\w v)\le \Long (\w c)=\sys(V,f,g),$$
ce qui achève la preuve de (\ref{sysdep}) puisque $\gamma\neq 1_ {\Gamma}$.
\end{proof}

\begin{rema}
\label{rema-finsler_versus_riemannien}
Ce théorème permet de définir la notion de systole relative dans un cadre plus large. Soient $V'$  une pseudo-variété et $f': V'\to K(\pi,1)$. Lorsque $V'$ est munie d'une métrique de longueur $d'$, la systole relative $\sys(V',f',d')$ a alors un sens. 
Supposons maintenant que $V'$  soit  incluse dans $K(V)$. Notons $d_\infty$ la distance de longueur induite sur $V'$ par la norme $\norm{\ }_\infty$. Notons encore $d'$ la métrique de longueur induite par la métrique riemannienne lisse par morceaux sur $V'$ fournie par la remarque \ref{rema-metrique}. On a alors $d_\infty\le d'$ et ainsi $\sys(V',f',d_\infty)\le \sys(V',f',d')$. 
\end{rema}

\subsection{Systole relative des cycles géométriques dans \boldmath$K(V)$\unboldmath}
On suppose construite une extension $K(V)$ du cycle géométrique $(V,f,g)$ pour laquelle $J:V\to K(V)$ est un plongement, à partir d'une partie $V_0$ suffisamment dense de $V$, et on note comme précédemment $V'=J(V)$.
On va montrer que l'on peut contrôler la systole relative de certains cycles géométriques inclus dans $K(V)$ et qui représentent $h$. On considère un cycle géométrique $(V,f,g)$. Quitte à multiplier la métrique initiale $g$ par une constante, on peut supposer que $\sys(V,f,g)=2$ (ce qui ne change pas $\sigma(V,f,g)$), et \textbf{on fait cette hypothèse dans toute la suite}. On considère maintenant l'application  $\w f :\w V\to \w{K(\pi,1)}$, où $\w{K(\pi,1)}\to K(\pi,1)$ est le revêtement universel de $K(\pi,1)$. On note $\w V_0$ le relevé de $V_0$ à $\w V$.\\

Regardons quelques conséquences de l'hypothèse $\sys(V,f,g)=2$. Supposons que $\w v$ et $\w w$ dans $\w V$ vérifient $\w \dist(\w v,\w w)\le 1$. On a alors, pour tout $\gamma\neq1_\Gamma$ dans $\Gamma$ :
$$2\le \w\dist(\w v,\gamma\cdot \w v)\le \w\dist(\w v,\gamma\cdot \w w)+\w\dist(\gamma\cdot\w w, \gamma\cdot\w v),$$

Mais $\Gamma$ agit par isométries sur $\w V$ : $ \w\dist(\gamma\cdot\w w, \gamma\cdot\w v)=\w\dist(\w w,\w v)$. Il vient donc :
$$\w\dist(\w v,\gamma\cdot \w w)\ge 1.$$

Une conséquence est la suivante. Supposons que $\w v$, $\w w$ dans $\w V$ et $\alpha$ dans $\Gamma$ vérifient $\w \dist(\w v,\alpha\cdot\w w)\le 1$. Pour tout $\beta\neq1_\Gamma$ dans $\Gamma$, on a donc $\w \dist(\w v,\beta\cdot(\alpha\cdot\w w))\ge 1$. Prenons maintenant $\gamma\neq \alpha$ dans $\Gamma$. Avec  $\beta=\gamma\alpha^{-1}$ (qui est différent de $1_\Gamma$) on obtient :
$$\w \dist(\w v,\gamma\cdot\w w)\ge 1.$$

Pour $\psi\in L^\infty(\w V_0)$ et $\gamma\in \Gamma$, on considère l'élément $\gamma\cdot \psi$ de $L^\infty(\w V_0)$ défini, pour tout $\w w\in \w V_0$, par :
$$\gamma\cdot \psi(\w w)=\psi(\gamma^{-1}\cdot \w w).$$

On définit ainsi une action de $\Gamma$ sur $L^\infty(\w V_0)$.

Appelons maintenant $\w D$ un domaine fondamental (connexe) de l'action de $\Gamma$ sur $\w V$. Pour $\w v \in \w D$, on considère l'application :
$$\w I_0(\w v):\w V_0\to \R$$
définie par : 
$I_0(\w v)(\w w)=\begin{cases}\min(1,\dist(p(\w v),p(\w w)))\ \mbox{si $\w w\in  \w D\cap\w V_0$};\\1\ \ \mbox{sinon}.\end{cases}$.

Lorsque $\gamma\in \Gamma\setminus\set{1_\gamma}$, on pose encore : $\w I_0(\gamma\cdot \w v)=\gamma\cdot \w I_0(\w v)$, de sorte que $\w I_0:\w V\to L^\infty(\w V_0)$ est une application $\Gamma$-équivariante.\\ 

\begin{exem}
Illustrons cette construction avec un exemple élémentaire (voir figure \ref {fig-exempleI0}). On prend pour $V$ le cercle $\quot{[0,2]}{0\sim 2}$. On note $[x]$ la classe de $x$ dans $V$ et on prend $V_0=\set{[0],[\tfrac 23],[\tfrac 43]}$. On a ici $\Gamma=\Z$, $\w V=\R$ et $\w V_0=\tfrac 23\Z$. On choisit $\w D=[0,2[$. L'application $\w I_0(0)$ est définie par :
$$\w I_0(0)(x)=\begin{cases}0&\mbox{si $x=0$}\\\tfrac 23&\mbox{si $x=\tfrac 23$}\\ 1&\mbox{sinon}\end{cases}.$$
A titre d'exemple, par définition, pour $x$ réel, on a : $\w I_0(2)(x)=\w I_0(0)(x-2).$
\begin{figure}[h]
\begin{center}
\includegraphics[width=0.7\linewidth]{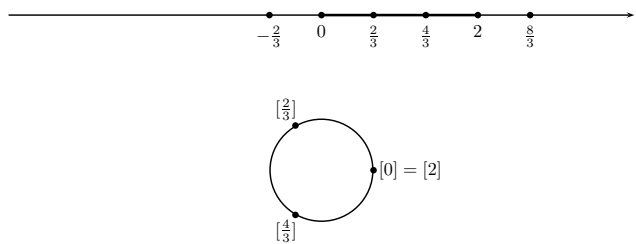}
\caption{Un exemple élémentaire}
\label{fig-exempleI0}
\end{center}
\end{figure}
\end{exem}

Revenons à la construction générale. Prenons $v$ dans $V$ et considérons le cube $K(v)\subset L^\infty(V_0)$ (voir § \ref{subsection-complexecubique}). Pour $\varphi$ dans $K(v)$, on définit un élément $\w \varphi$ de $L^\infty(\w V_0)$ (de la même manière que pour $\w I_0$ ci-dessus) en posant, pour $\w w\in\w D\cap \w V_0$ et $\gamma$ dans $\Gamma\setminus\set{1_\gamma}$ : 
$$\w \varphi(\w w)=\varphi(p(\w w))\et \w \varphi(\gamma\cdot \w w)=1.$$

Lorsque $\w v$ est dans $\w D$ et vérifie $p(\w v)=v$, on note $\w K(\w v)$ l'ensemble des $\w \varphi$ dans $L^\infty(\w V_0)$ ainsi construites à partir des éléments $\varphi$ de $K(v)$, et, pour $\gamma$ dans $\Gamma$, on pose :
$$\w K(\gamma\cdot \w v)=\gamma\cdot \w K(\w v).$$

\begin{rema}
\label{rema-coordonnéescubegénéralisé}
Soit $\w v$ dans $\w V$. Pour $\w \varphi$ dans $\w K(\w v)$, il existe toujours deux éléments $\w w $ et $\w w'$ de $\w V_0$ tels que $\w\varphi(\w w)=0$ et $\w\varphi(\w w')=1$. En effet, l'application $\w \varphi$ est construite à partir  d'un élément $\varphi$ d'un certain cube $K(v)$ de $K(V)$, et on a vu (voir remarque \ref{rema-complexecubique}) que l'une des coordonnées de $\varphi$ est nulle.
\end{rema}

Notons maintenant $\w R_\eps$ la rétraction du complexe cubique $L^\infty(\w V_0)$ fournie par le théorème \ref{theo-retraction}. On peut construire, de la même manière que pour $V$, une extension $K(\w V)= K(\w V,\w V_0,\eps)$  de $\w V$, à partir de l'application $\w J=\w R_\eps\circ \w I_0$. 

Avec les notations ci-dessus, pour tout $\w v$ dans $\w V$, le cube minimal $K(\w v)$ de $L^\infty(\w V_0)$ qui contient $\w J(\w v)$ n'est autre que le cube $\w K(\w v)$. 
\begin{lemm} Le groupe $\Gamma$ agit sans point fixe et totalement discontinûment sur $K(\w V)$, de sorte que :
$$pr:K(\w V)\to \quot{K(\w V)}{\Gamma}=K(V)$$
est un revêtement régulier.
\end{lemm}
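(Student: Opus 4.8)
Le plan est d'abord d'\'etablir une minoration uniforme de la distance entre un point de $K(\w V)$ et ses translat\'es non triviaux sous $\Gamma$, d'en d\'eduire aussit\^ot la libert\'e et la discontinuit\'e de l'action, puis d'identifier le quotient \`a $K(V)$. Notons d'embl\'ee que $\Gamma$ agit par isom\'etries sur $L^\infty(\w V_0)$ en permutant les coordonn\'ees via son action sur $\w V_0$, et que cette action pr\'eserve $K(\w V)=\bigcup_{\w v\in\w V}\w K(\w v)$ puisque $\w K(\gamma\cdot\w v)=\gamma\cdot\w K(\w v)$ par construction. On fixe un domaine fondamental $\w D$ \emph{strict} de l'action de $\Gamma$ sur $\w V$ (chaque orbite le rencontrant exactement une fois), de sorte que tout $\w v\in\w V$ s'\'ecrive de mani\`ere unique $\gamma_0\cdot\w v_0$ avec $\w v_0\in\w D$ et que les « blocs » $\gamma\cdot(\w D\cap\w V_0)$, $\gamma\in\Gamma$, forment une partition de $\w V_0$.

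Pour la minoration, soit $\w\varphi\in K(\w V)$ ; on choisit $\w v$ avec $\w\varphi\in\w K(\w v)$, on \'ecrit $\w v=\gamma_0\cdot\w v_0$ avec $\w v_0\in\w D$, d'o\`u $\w\varphi=\gamma_0\cdot\w\psi$ avec $\w\psi\in\w K(\w v_0)$. Par d\'efinition de $\w K(\w v_0)$, l'application $\w\psi$ vaut identiquement $1$ sur $\w V_0\setminus(\w D\cap\w V_0)$ ; et d'apr\`es la remarque \ref{rema-coordonn�escubeg�n�ralis�} (cons\'equence de la remarque \ref{rema-complexecubique}), $\w\psi$ s'annule en un point, qui est donc n\'ecessairement un point $\w w_0\in\w D\cap\w V_0$. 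Ainsi $\w\varphi$ s'annule en $\gamma_0\cdot\w w_0$ et vaut $1$ en dehors du bloc $\gamma_0\cdot(\w D\cap\w V_0)$. Pour $\gamma\in\Gamma\setminus\set{1_\Gamma}$ on a $(\gamma\cdot\w\varphi)(\gamma_0\cdot\w w_0)=\w\varphi(\gamma^{-1}\gamma_0\cdot\w w_0)$, et comme $\gamma^{-1}\gamma_0\neq\gamma_0$ ce point est dans un autre bloc, o\`u $\w\varphi$ vaut $1$. Par cons\'equent $\norm{\w\varphi-\gamma\cdot\w\varphi}_\infty\ge 1$, donc $\dist(\w\varphi,\gamma\cdot\w\varphi)\ge 1$ d'apr\`es la remarque \ref{rema-topologiecomplexe}.

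Cette minoration entra\^ine imm\'ediatement que l'action est sans point fixe et que toute boule $B(\w\varphi,\tfrac12)$ est disjointe de ses translat\'ees par $\Gamma\setminus\set{1_\Gamma}$, donc que l'action est proprement (totalement) discontinue. Elle montre de plus que $\gamma_0$ est l'unique bloc de $\w V_0$ sur lequel $\w\varphi$ poss\`ede une coordonn\'ee nulle, d'o\`u l'unicit\'e de $\gamma_0$ et de $\w\psi=\gamma_0^{-1}\cdot\w\varphi$ en fonction de $\w\varphi$. L'application $\w\varphi\mapsto\w\psi\mid_{\w D\cap\w V_0}$, vue comme \'el\'ement de $K(V)\subset L^\infty(V_0)$ via l'identification $\w D\cap\w V_0\simeq V_0$ fournie par $p$, est alors bien d\'efinie, surjective, de fibres exactement les $\Gamma$-orbites, et c'est localement une isom\'etrie (localement il s'agit seulement de restreindre, pour la norme uniforme, des fonctions valant $1$ en dehors d'un bloc). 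Elle induit donc une isom\'etrie $\quot{K(\w V)}{\Gamma}\to K(V)$, et, par la th\'eorie des quotients par une action libre et proprement discontinue par hom\'eomorphismes (voir \cite{Hatcher}), $pr$ est un rev\^etement r\'egulier de groupe $\Gamma$.

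Le point d\'elicat sera l'analyse par blocs sous-jacente \`a la minoration, et en particulier le fait que tout \'el\'ement de $K(\w V)$ poss\`ede une coordonn\'ee nulle confin\'ee \`a un seul bloc fondamental : cela repose \`a la fois sur le choix de $V_0$ assez dense et de $\eps<\tfrac12$ assez petit (remarques \ref{rema-complexecubique} et \ref{rema-coordonn�escubeg�n�ralis�}) et sur le choix de $\w D$ comme domaine fondamental strict, qui seul rend $\w I_0$ et les $\w K(\w v)$ sans ambigu\"it\'e. Une fois ce point acquis, la libert\'e de l'action, la discontinuit\'e, l'identification du quotient \`a $K(V)$ et la propri\'et\'e de rev\^etement sont essentiellement formelles.
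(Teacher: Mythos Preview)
Your argument is correct and follows essentially the same route as the paper's own proof. Both rest on the same core observation: any $\w\varphi\in K(\w V)$ has a coordinate equal to $0$ confined to a single $\Gamma$-block of $\w V_0$ and equals $1$ on the other blocks, so comparing $\w\varphi$ with $\gamma\cdot\w\varphi$ at that coordinate gives $\norm{\w\varphi-\gamma\cdot\w\varphi}_\infty\ge 1$. The only difference is cosmetic: the paper phrases the separation as a contradiction starting from a ball of radius $\tfrac12$, whereas you establish the lower bound $\dist(\w\varphi,\gamma\cdot\w\varphi)\ge 1$ directly; your identification of the quotient with $K(V)$ via the restriction map $\w\varphi\mapsto\w\psi\!\mid_{\w D\cap\w V_0}$ is also a bit more explicit than the paper's description.
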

\begin{proof}[Démonstration]
Soit $\w \varphi$ dans $K(\w V)$. On considère la boule ouverte 
$$U=\set{\w \psi\in K(\w V)\mid\dist(\w \varphi-\w \psi)<\tfrac12}.$$
Prenons $\gamma$ dans $\Gamma\setminus\set{1_\Gamma}$. Raisonnons par l'absurde et supposons que $\gamma\cdot U \cap U\neq \emptyset$. Il existe alors $\w\psi\in U$ telle que $\norm{\w \varphi-\gamma\cdot\w \psi}_\infty<\tfrac12$ (voir remarque \ref{rema-topologiecomplexe} page \pageref{rema-topologiecomplexe}). Par inégalité triangulaire, on obtient :
\begin{equation}
\label{eq-norme}
	\norm{\w \psi-\gamma\cdot\w \psi}_\infty<1.
\end{equation}
La fonction $\w \psi$ appartient à un certain cube $K(\w v)$. Supposons que $\w v\in \w D$. Il existe $\w w$ dans $\w V_0\cap D$ tel que $\w \psi(\w w)=0$ (voir remarque \ref{rema-coordonnéescubegénéralisé}). Or $\w \psi(\gamma^{-1}\cdot\w w)=1$, de sorte que $\mod{\w \psi(\w w)-\gamma\cdot\w \psi(\w w)}=1$, ce qui contredit (\ref{eq-norme}). Si maintenant $\w v\notin \w D$, on dispose de $\alpha\in \Gamma$, de $\w v'$ dans $\w D$ et de $\w \psi_0$ dans $\w K(\w v')$ tels que $\w \psi=\alpha\cdot\w \psi_0$. L'inégalité (\ref{eq-norme}) implique alors, pour tout $\w w\in \w V_0$ :
$$\mod{\w \psi_0(\w w)-\w\psi_0(\gamma^{-1}\cdot\w w)}<1,$$
et on peut conclure à une absurdité de la même façon.
Ainsi l'action de $\Gamma$ sur  $K(\w V)$ est sans point fixe et totalement discontinue.

Enfin, pour $\w \varphi$ dans $K(\w V)$, la classe $pr(\varphi)$ est représentée par un élément 
$\w \varphi_0$ d'un certain cube $K(\w v)$ de $K(\w V)$ où $\w v$ est dans  $\w D$. Notons $v=p(\w v)\in V$. Alors $pr(\w \varphi_0)$ n'est autre que l'élément $\varphi_0$ de $K(v)$ défini, pour $w\in V_0$, par :
$$\varphi_0(w)=\w \varphi_0(\w w ),$$
où $\w w\in \w D\cap p^{-1}(w)$. On a ainsi $\quot{K(\w V)}{\Gamma}=K(V)$.
\end{proof}

On peut noter que l'application $\w J : \w V\to K(\w V)$ est encore lipschitzienne de rapport $\tfrac{1}{1-2\eps}$ et que le diagramme suivant est commutatif :
$$\begin{CD}
\w V @>\w J>>K(\w V) \\ @V p VV @VV pr V\\ V@>>J> K(V)
\end{CD}$$

\begin{lemm}
\label{lemmecomplexe}
Soient $\w v$ et $\w v'$ des points dans $\w V$. On note $K_1=K(\w v)$ et $K_2=K(\w v')$.\\
Si pour  $m\ge 1$ entier on a $\w \dist(\w v,\w v')\ge m$ alors :
$$\dist(K_1,K_2)\ge m.$$
\end{lemm}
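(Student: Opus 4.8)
The plan is to establish the equivalent statement that every rectifiable arc $\w c\colon[0,\ell]\to K(\w V)$ joining a point of $K_1=K(\w v)$ to a point of $K_2=K(\w v')$ has $\norm{\ }_\infty$-length at least $m$; since $\dist$ on $K(\w V)$ is the length metric associated with $\norm{\ }_\infty$ (remarque \ref{rema-topologiecomplexe}), this is exactly $\dist(K_1,K_2)\ge m$. The basic observation is that a point of $K(\w V)$ remembers the point of $\w V$ it comes from: if $\w\varphi$ lies in a cube $\w K(\w u)$ of $K(\w V)$, then $\w\varphi$ has a coordinate $\w w\in\w V_0$ fixed at $0$ on that cube, and from the definition of $\w K(\w u)$ together with the standing hypothesis $\sys(V,f,g)=2$ — which makes $p\colon\w V\to V$ an isometry on balls of radius $1$ — any such $\w w$ satisfies $\w\dist(\w u,\w w)\le\eps$; dually, $\w\varphi(\w w)=1$ for every $\w w\in\w V_0$ lying outside the $\Gamma$-translate of the fundamental domain $\w D$ that contains $\w u$, and for every $\w w$ with $\w\dist(\w u,\w w)$ within $\eps$ of $1$. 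A first consequence is that if two cubes $\w K(\w u)$, $\w K(\w u')$ of $K(\w V)$ share a face, then $\w\dist(\w u,\w u')\le2\eps<1$, since a point of the common face is forced to vanish at a coordinate within $\eps$ of $\w u$ and at a coordinate within $\eps$ of $\w u'$.

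Next I would fix a geodesic of $(\w V,\w\dist)$ from $\w v$ to $\w v'$ and mark $m+1$ points $\w z_0=\w v,\w z_1,\dots,\w z_m=\w v'$ on it with $\w\dist(\w z_j,\w z_{j+1})=\tfrac1m\,\w\dist(\w v,\w v')\ge1$. The claim — the technical heart of the proof — is that one can extract parameters $0\le t_0\le t_1\le\dots\le t_m\le\ell$ such that $\w c(t_j)$ lies in a cube $\w K(\w u_j)$ with $\w\dist(\w u_j,\w z_j)$ small, and, for each $j$, a coordinate $\w w_j\in\w V_0$ fixed at $0$ on $\w K(\w u_j)$ (so $\w\dist(\w u_j,\w w_j)\le\eps$). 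Granting this, the mechanism is as follows: on $\w K(\w u_j)$ the coordinate $\w w_j$ equals $0$, while on $\w K(\w u_{j+1})$ it equals $1$. Indeed $\w\dist(\w z_{j+1},\w w_j)\ge1-\eps$, and the two triangle-inequality consequences of $\sys(V,f,g)=2$ recorded just before the lemma, applied to $\w z_j$, $\w w_j$, $\w z_{j+1}$, rule out any $\gamma\cdot\w w_j$ with $\gamma\ne1_\Gamma$ coming within distance $1$ of $\w z_{j+1}$; moreover $\w w_j$ sits outside the fundamental-domain translate of $\w u_{j+1}$, so $\w w_j$ is a coordinate fixed at $1$ on $\w K(\w u_{j+1})$. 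Since $\w\psi\mapsto\w\psi(\w w_j)$ is $1$-Lipschitz for $\norm{\ }_\infty$, hence for $\dist$, the piece $\w c|_{[t_j,t_{j+1}]}$ has length at least $\mod{\w c(t_j)(\w w_j)-\w c(t_{j+1})(\w w_j)}=1$; as the intervals $[t_j,t_{j+1}]$ are pairwise disjoint, $\ell\ge\sum_{j=0}^{m-1}1=m$.

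The main obstacle is exactly the extraction of the parameters $t_j$ in increasing order together with the verification of the clean $0\to1$ swing of $\w w_j$: one must control how the connected fundamental domain $\w D$ and the map $\w I_0$ interact with $\w\dist$, and rule out that a $\w\dist$-displacement of size $\ge 1$ be realised through a single cube or accumulated across faces with only partial coordinate swings — precisely what the hypothesis $\sys(V,f,g)=2$, via the two inequalities preceding the lemma, and the choice of $\w D$ are designed to prevent (one uses here that face-adjacent cubes move the associated base point by at most $2\eps$, so that crossing enough faces to cover a unit of $\w\dist$ forces a full coordinate transition). Once this bookkeeping is in place, the inequality $\dist(K_1,K_2)\ge m$ follows; the case $m=2$, asserting that passing from $V$ to $K(V)$ does not decrease the relative systole, is the one needed in the sequel.
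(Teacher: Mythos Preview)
Your argument has a genuine gap at precisely the step you flag as ``the technical heart'': the existence of parameters $t_0\le\dots\le t_m$ with $\w c(t_j)\in\w K(\w u_j)$ and $\w\dist(\w u_j,\w z_j)$ small. The points $\w z_j$ are chosen on a fixed geodesic of $(\w V,\w\dist)$ from $\w v$ to $\w v'$, and this geodesic bears no relation to the arc $\w c$ in $K(\w V)$. For each $t$ the point $\w c(t)$ lies in some cube $\w K(\w u(t))$, and you correctly observe that passing to a face--adjacent cube moves the base point by at most $2\eps$; so the ``shadow'' $t\mapsto\w u(t)$ traces a quasi--continuous path in $\w V$ from (near) $\w v$ to (near) $\w v'$. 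But nothing forces this shadow to approach the particular geodesic on which your $\w z_j$ sit: in a pseudo--manifold of dimension $\ge 2$ there are paths from $\w v$ to $\w v'$ that stay at distance $\ge 1$ from every $\w z_j$, $1\le j\le m-1$, and the shadow of $\w c$ may well be one of them. Your $0\to 1$ swing of $\w w_j$ then fails, since the conclusion $\w\dist(\w u_{j+1},\w w_j)\ge 1-\eps$ rested on $\w u_{j+1}$ being close to $\w z_{j+1}$.

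The paper sidesteps this by a short induction on $m$. The case $m=1$ is exactly your coordinate observation: a $\w w\in\w V_0$ with $\w\dist(\w v,\w w)\le\eps$ gives a coordinate fixed at $0$ on all of $K_1$ and at $1$ on all of $K_2$, whence $\dist(K_1,K_2)\ge 1$. For the inductive step one takes a shortest path in $K(\w V)$ from $K_1$ to $K_2$ and the first point $\w x$ on it at distance $1$ from $K_1$; this $\w x$ lies in some cube $K(\w w)$, and the (contrapositive of the) case $m=1$ gives $\w\dist(\w v,\w w)\le 1$, hence $\w\dist(\w w,\w v')\ge m$, so by induction $\dist(K(\w w),K_2)\ge m$ and additivity of length along the chosen path yields $\dist(K_1,K_2)\ge 1+m$. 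In effect this is your mechanism, but with the intermediate base points read off the shadow of $\w c$ itself rather than from an unrelated geodesic, and with the bookkeeping absorbed into the induction. If you want to keep your direct argument, the fix is to discard the $\w z_j$ and instead choose the $t_j$ so that $\w\dist(\w v,\w u(t_j))$ is close to $j$; the reverse triangle inequality then gives $\w\dist(\w u(t_{j+1}),\w w_j)\ge 1-O(\eps)$ and the rest of your computation goes through.
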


\begin{proof}[Démonstration]
Rappelons que $K_1$ et $K_2$ sont les cubes minimaux dans $K(\w V)$ qui contiennent respectivement $\w J(\w v)$ et $\w J(\w v')$.\\
On procède par récurrence sur $m\ge 1$. Montrons le résultat pour $m=1$. On suppose que $\w \dist(\w v,\w v')\ge 1$. On choisit un point $\w w$ de $\w V_0$ suffisamment proche de $\w v$, de sorte que $\w J(\w v)(\w w)=0$ et $\w J(\w v')(\w w)=1$. Par minimalité de $K_1$ et $K_2$, pour tout $\w \varphi$ dans $K_1$ et tout $\w \psi$ dans $K_2$, on a $\w \varphi(\w w)=0$ et $\psi(\w w)=1$, donc
$$1=\norm{\w \psi-\w \varphi}_\infty\le \dist(\w \psi,\w \varphi).$$
Il en résulte que $\dist(K_1,K_2)\ge 1$.

Supposons maintenant le résultat vrai pour un certain entier naturel $m$. Soient $\w v$ et $\w v'$ dans $\w V$ tels que $\w \dist(\w v,\w v')\ge (m+1)$. 
On prend un plus court chemin dans $K(\w V)$ entre les cubes $K_1$ et $K_2$ et on prend un point $\w x$ sur ce chemin pour lequel $\dist(\w x,K_1)=1$. Ce point $\w x$, par construction de $K(\w V)$, appartient à  un certain cube minimal $K(\w w)$ pour un certain point $\w w$ de $\w V$. Comme $1= \dist (K_1,\w x)\le \dist (\w J(\w v),\w x)$ on a :

\begin{equation}
\label{eqcomplexe}
	\dist(K_1,K_2)= \dist(K_1,\w x)+\dist(\w x,K_2) \ge 1+\dist(K(\w w),K_2).
\end{equation}
De plus $\w \dist(\w w,\w v)\le 1$ (puisque $\dist(K_1,K(\w w))\le 1$). Ainsi on a $$\w \dist(\w w,\w v')\ge \w \dist(\w v,\w v')-\w \dist(\w w,\w v)\ge \w \dist(\w v,\w v')-1=m.$$ Le résultat étant supposé vrai au rang $m$, on  a 
$\dist(K_2,K(\w w))\ge m$. 
On peut donc conclure, via (\ref{eqcomplexe}), que le résultat est  vrai pour $m+1$.
\end{proof}

\begin{lemm}
\label{prolongement}
L'application $f':V'\to \K$ (voir \normalfont{\ref{rem-essentielle}}) se prolonge en une application continue $F:K(V)\to\K$.
\end{lemm}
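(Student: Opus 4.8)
The plan is to use the regular $\Gamma$-covering $pr\colon K(\w V)\to K(V)$ constructed above, together with the asphericity of $\K$. Being a regular covering with deck group $\Gamma$, the map $pr$ is classified by an epimorphism $c\colon\pi_1(K(V))\to\Gamma$; composing with the inclusion $\Gamma\hookrightarrow\pi$ (recall $\Gamma\cong\im f_*$) we obtain a homomorphism $\rho\colon\pi_1(K(V))\to\pi$. I will first check that $\rho$ restricts, along the inclusion $V'\hookrightarrow K(V)$, to the homomorphism $f'_*$ induced by $f'\colon V'\to\K$ (Remarque essentielle~\ref{rem-essentielle}), and then realize $\rho$ by a map $G\colon K(V)\to\K$ and deform $G$ to the desired $F$.

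The first step is the identification of $c$ on $\pi_1(V')$, and here the equivariant lift $\w J\colon\w V\to K(\w V)$ enters: it satisfies $pr\circ\w J=J\circ p$ and $\w J(\gamma\cdot\w v)=\gamma\cdot\w J(\w v)$. Fix a basepoint $x_0\in V'$, set $v_0=\psi(x_0)\in V$, choose $\w v_0\in p^{-1}(v_0)$, so that $\w x_0:=\w J(\w v_0)$ lies over $x_0$. For a loop $\ell$ in $V'$ based at $x_0$, the loop $\ell_V=\psi\circ\ell$ in $V$ lifts through $p$ to a path from $\w v_0$ to $f_*[\ell_V]\cdot\w v_0$; applying $\w J$ and using $pr\circ\w J=J\circ p$ together with $J\circ\psi=\mathrm{id}_{V'}$, the $pr$-lift of $\ell$ starting at $\w x_0$ ends at $f_*[\ell_V]\cdot\w x_0$. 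Hence $c$ sends $[\ell]$ to $f_*[\ell_V]=f'_*[\ell]$, so $\rho$ restricts on $\pi_1(V')$ to $f'_*$ (which already takes values in $\Gamma=\im f_*$).

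Since $K(V)$ is a CW-complex and $\K$ is aspherical, $\rho$ is induced on $\pi_1$ by a continuous map $G\colon K(V)\to\K$. Then $G|_{V'}$ and $f'$ are two maps $V'\to\K$ inducing the same homomorphism on fundamental groups, hence, again by asphericity of $\K$, they are homotopic. Finally, $(K(V),V')$ is a cofibration: $V'$ is homeomorphic to the compact polyhedron $V$ and is closed in $K(V)$. So the homotopy from $G|_{V'}$ to $f'$ extends over $K(V)$ starting from $G$, and its endpoint is a continuous map $F\colon K(V)\to\K$ with $F|_{V'}=f'$.

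The only genuine point is the computation $\rho\circ(\text{incl.})_*=f'_*$, which needs some care with basepoints and uses both the equivariant embedding $\w J$ and the covering $pr$; the rest is formal, resting only on the asphericity of $\K$ and the cofibration property of $(K(V),V')$. An alternative that produces $F$ directly, bypassing the final homotopy‑extension step, is to build a $\Gamma$-equivariant map $\w F\colon K(\w V)\to\w{K(\pi,1)}$ extending the canonical equivariant lift of $f'$ to $pr^{-1}(V')$, cell by cell over the $\Gamma$-CW structure of $K(\w V)$: all extension obstructions vanish because $\w{K(\pi,1)}$ is contractible, and then $F:=q\circ\w F$ (with $q\colon\w{K(\pi,1)}\to\K$) descends to $K(V)$ and restricts to $f'$ on $V'$.
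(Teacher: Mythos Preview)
Your argument is correct, but the route is genuinely different from the paper's. The paper does exactly what you describe in your closing ``alternative'': it picks a fundamental domain $\Delta\subset K(\w V)$ for the $\Gamma$-action, extends the equivariant lift $\w{f'}$ from $\Delta\cap\w{V'}$ over $\Delta$ using that $\w{\K}$ is contractible, propagates by equivariance to $\w F\colon K(\w V)\to\w{\K}$, and descends. Your main argument instead stays downstairs: you read off the classifying homomorphism $\rho\colon\pi_1(K(V))\to\pi$ from the covering $pr$, check via the equivariant embedding $\w J$ that $\rho$ restricts to $f'_*$ on $\pi_1(V')$, realize $\rho$ by a map $G$ into the aspherical target, and then use HEP to correct $G|_{V'}$ to $f'$. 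This is cleaner conceptually and isolates precisely what is being used (asphericity of $\K$ twice, and a cofibration), whereas the paper's hands-on extension is more explicit but glosses over the continuity of the equivariant patching on $\partial\Delta$ --- a point your $\Gamma$-CW phrasing of the alternative actually handles better.

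One small remark: your justification that $(K(V),V')$ is a cofibration (``$V'$ is homeomorphic to the compact polyhedron $V$ and is closed in $K(V)$'') is not quite enough as stated --- a closed subspace is not automatically a cofibration. You want to invoke that both $K(V)$ and $V'$ are ANRs (finite polyhedra, or compact subsets of $\mathbb{R}^N$ that are Lipschitz retracts), and a closed ANR inside an ANR is always a cofibration. With that clause added, the argument is complete.
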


\begin{proof}[Démonstration]
Soit $\w{V'}$ le relevé de $V'$ à $K(\w V)$. Notons  $\w f':\w{V'}\to \w{\K}$ l'application équivariante qui relève $f':V'\to \K$ :
$$\begin{CD}
\w {V'} @>\w {f'}>>\w {\K} \\ @V pr\mid_{\w {V'}} VV @VV q V\\ V'@>>f'> \K
\end{CD}$$
Soit $\Delta$ une partie de $K(\w V)$ qui contient exactement un représentant de chaque orbite de l'action de $\Gamma$ sur $K(\w V)$, et qui est d'intersection  non vide avec $\w {V'}$. Puisque $\w {\K}$ est simplement connexe, la restriction de $\w f'$ à $\Delta\cap\w{V'}$, se prolonge en une application continue $\w {f'}_1:\Delta\cup \w{V'}\to \w{\K}$.
On prolonge alors cette application $\w{f'}$ en une application équivariante 
$$\w F:K(\w V)\to \w {\K},$$
en posant, $\w F(\gamma\cdot \w v')=\gamma\cdot  \w {f'}_1(\w v')$. Cette application $\w F$ induit alors une application continue 
$$F:K(V)\to \K$$
telle que le diagramme suivant commute, où $q:\w \K\to \K$ est la projection canonique.
$$\begin{CD}
 {K(\w V)} @>\w F>>\w {K(\pi,1)} \\ @V pr VV @VV q V\\ K(V)@>>F> \K
\end{CD}$$
Cette dernière application $F$ prolonge $f'$. 
\end{proof}

\label{notations}

Soit maintenant $V''$ une pseudo-variété de dimension $n$ plongée dans $K(V)$. Sa classe fondamentale est représentée par un cycle $c_{V''}=\dsum_{i=1}^m \sigma_i$ où $\set{\sigma_1,\ldots,\sigma_m}$ est l'ensemble des simplexes de dimension $n$ qui représentent $V''$. Ce cycle peut aussi bien se considérer comme une chaîne singulière de $K(V)$, qui est clairement un cycle. On confondra par la suite $c_{V''}$ et $V''$.

\begin{defi} On dira que les pseudo-variétés $V'=J(V)$ et $V''$ sont \textit{homologues dans $K(V)$}\index{Pseudo-variétés homologues} lorsque les cycles $c_{V'}$ et $c_{V''}$ représentent la même classe dans $H_n(K(V);\Z)$.
\end{defi}

On munit $V''$ de la métrique riemannienne $g''$ fournie par la remarque \ref{rema-metrique}, page \pageref{rema-metrique}. On a vu que $(V', f', g')$ est un cycle géométrique qui représente la classe $h$ dans $H_n(\pi;\Z)$ et que l'application $f':V'\to \K$ se prolonge en une application continue $F :K(V)\to \K$. Notons $f''$ la restriction de $F$ à $V''$. \\
 

\begin{theo}
\label{theosyst}
Soit  $V''$ une pseudo-variété, homologue à $V'$ dans $K(V)$. Le cycle géométrique $(V'',f'',g'')$ représente la classe $h$. Si de plus le cycle $(V,f,g)$ est normalisé, i.e. lorsque $\im f_*=\pi$, où $f_*:\pi_1(V)\to \pi$, alors :
$$\sys(V'',f'',g'')\ge \sys(V,f,g),$$
\end{theo}

\begin{proof}[Démonstration]
Rappelons que l'on a supposé $\sys(V,f,g)=2$.
Notons $f'_*:H_n(V';\Z)\to H_n(\pi;\Z)$, $f''_*:H_n(V'';\Z)\to H_n(\pi;\Z)$ et $F_*:H_n(K(V);\Z)\to H_n(\pi;\Z)$ les morphismes induits au niveau des groupes d'homologie. On a alors :
$$f''_*[V'']=F_*[c_{V''}]=F_*[c_{V'}]=f'_*[V']=h.$$

En effet, montrons que $F_*[c_{V''}]=f''_*[V'']$ (la seconde égalité résultant du fait que $V''$ est homologue à $V'$ dans $K(V)$). Pour cela, regardons ce qui se passe au niveau des complexes de chaînes. Rappelons  que le cycle $c_{V''}$, qui représente la classe fondamentale $[V'']$ dans $H_n(V'';\Z)$, peut être considéré comme un cycle de $K(V)$. Nous mettrons $\sharp$ en indice pour les applications induites au niveau des complexes de chaînes. Désignons par $\mathcal C$ une chaîne (cycle) de dimension $n$ dans $K(\pi,1)$ qui représente la classe $h''=f''_*[V'']$. Il existe $\mathcal C'$ chaîne de dimension $n+1$ dans $\K$ telle que $f''_\sharp(c_{V''})=\mathcal C+\partial \mathcal C'$. Comme $F$ prolonge $f''$ à $K(V)$, on a $F_\sharp(c_{V''})=f''_\sharp(c_{V''})$ et ainsi  : 
$$F_\sharp(c_{V''})=\mathcal C+\partial \mathcal C'.$$
Il en résulte que $F_\sharp(c_{V'})$ est homologue à $\mathcal C$ donc $F_*[c_{V''}]=f''_*[V'']$.\\

Passons à l'inégalité concernant les systoles relatives. On se rappelle tout d'abord que 
$$\sys(V'')=\inf \left\{\w\dist(\w v'',\gamma\cdot \w v'')\mid \mbox{$\w v''\in \w {V''}$ et $\gamma\in G(\w {V''})\setminus\left\{1_{G(\w {V''})}\right\}$}\right\},$$
où $\w {V''}\to V''$ est le revêtement associé au sous-groupe $\ker f''_*$  de $\pi_1(V'')$ et $G(\w {V''})$ est le groupe des automorphismes de ce revêtement. Soit  $\w v''\in \w V''\subset K(\w V)$, cette dernière inclusion résultant du fait que $(V,f,g)$ est un cycle normalisé. Pour $\gamma$ dans $G(\w {V''})\setminus \left\{1_{G(\w {V''})}\right\}$,  on choisit un cube $K_1$ de $K(\w V)$ qui contient $\w v''$. 
Par construction de $K(\w V)$, il existe $\w v$ dans $\w V$ tel que $K_1$ contienne $\w J(\w v)$. Le cube $K_2=\gamma\cdot K_1$ contient alors $\gamma\cdot \w v''$, ainsi que $\gamma \cdot \w J(\w v)=\w J(\gamma\cdot \w v)$. Il vient ainsi :
$$\w \dist(\w v'',\gamma\cdot \w v'')\ge \dist(K_1,K_2).$$
Comme $\sys(V,f,g)=2 \le \w \dist(\w v,\gamma\cdot \w v)$, on obtient, d'après le lemme \ref{lemmecomplexe} appliqué à $m=2$, $\dist(K_1,K_2)\ge \sys (V,f,g)$. Ainsi :
$$\w \dist(\w v'',\gamma\cdot \w v'')\ge\sys (V,f,g).$$ 
Cette dernière inégalité étant valable pour tout $\w v''$ dans $\w {V''}$, on obtient le résultat souhaité, via le théorème \ref{theo-defsystole}.
\end{proof}

\begin{rema} On peut toujours supposer, et ce sera le cas dans la suite, que le cycle géométrique initial $(V,f,g)$ est normalisé : voir \cite{Babenko-TSU}.
\end{rema}

 \section{Preuve des théorèmes A et B}

\subsection{La preuve du théorème B}
On part d'un cycle géométrique $(V,f,g)$ (normalisé, voir remarque ci-dessus et théorème \ref{theosyst}) qui représente $h$ tel que $\sys(V,f,g)=2$. On choisit une partie $V_0$ de $V$, suffisamment dense dans $V$, ce qui permet de construire, pour $\eps>0$ suffisamment petit, le complexe cubique $K(V)$ ainsi que la pseudo-variété $V'=J(V)$ de $K(V)$ (voir la remarque \ref{rem-essentielle}).\\

Introduisons maintenant une notion particulière de volume d'une classe d'homologie $a$ dans $H_n(K(V);\Z)$. Pour $a$ dans $H_n(K(V);\Z)$, on considère l'ensemble $C(a)$ des pseudo-variétés de dimension $n$ incluses dans $K(V)$ qui représentent $a$. On pose :
$$\vol(a)=\inf \left\{\vol(c)\mid c\in C(a)\right\}.$$ 

Notons, par abus, $[V']$ la classe d'homologie de $c_{V'}$ dans $K(V)$. On appellera \textit{suite minimisante}\index{Suite minimisante} (pour $\vol[V']$) une suite $c_i$  de pseudo-variétés dans $K(V)$, homologues à $V'$, telle que :
$$\vol(c_i)\underset{+\infty}{\longrightarrow}\vol[V'].$$
Si une suite minimisante $(c_i)$ converge au sens de Hausdorff vers une partie compacte $W$ de $K(V)$, on dira que $W$ est \textbf{\textit{minimale}} lorsqu'aucune partie propre de $W$ (i.e. aucune partie incluse dans $W$ et distincte de $W$) n'est limite de Hausdorff d'une autre suite minimisante $c_i'$ dans $K(V)$.\\

Rappelons aussi ce qu'est la distance de Hausdorff. Soit $(X,\dist)$ un espace métrique. Notons $\mathcal K(X)$ l'ensemble de toutes les parties compactes de $X$. Pour $A\subset X$ et $\eps>0$, on pose :
$$\text{tube}(A,\eps)=\left\{x\in X\mid \dist(x,A)<\eps\right\}$$
Pour $A$ et $B$ dans $\mathcal K(X)$, la distance de Hausdorff\index{Distance de Hausdorff} entre $A$ et $B$ est :
$$\dist_{\mathcal H}(A,B)=\inf \left\{\eps>0\mid A\subset \text{tube}(B,\eps)\ \ \mbox{et}\ \ B\subset \text{tube}(A,\eps)\right\}$$\\
On sait alors que $(\mathcal K(X),\dist_{\mathcal H})$ est un espace métrique, qui de plus est compact lorsque $X$ l'est. 
On utilisera dans la suite les résultats suivants (voir \cite{BBI}).
\begin{quote}
\noindent \textbf{R1.} \label{H1} \textit{Si $(A_p)$ est une suite dans $\mathcal K(X)$ telle que $A_p\tend A\in \mathcal K(X)$, pour $\dist_{\mathcal H}$, alors  $A$ est exactement l'ensemble des limites des suites convergentes $(x_p)$ dans $X$ telles que $x_p\in A_p$ pour tout $p\in \N$}.

\noindent \textbf{R2.} \label{H2} \textit{Si  $(A_p)$ est une suite décroissante (pour l'inclusion) dans $\mathcal K(X)$, alors elle converge  pour $\dist_{\mathcal H}$ vers $\displaystyle \bigcap_{p\in \N}A_p$ dans $\mathcal K(X)$}.
\end{quote}

\begin{lemm}
\label{partieminimale}
On  peut choisir une suite minimisante $(V_i)$ de pseudo-variétés dans $K(V)$ qui converge pour la distance de Hausdorff vers une partie compacte minimale, notée $W_\infty$, de $K(V)$.
\end{lemm}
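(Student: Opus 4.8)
L'idée est d'exploiter la compacité de $(\mathcal K(K(V)),\dist_{\mathcal H})$, conséquence de la compacité de $K(V)$, combinée à un argument diagonal pour extraire une suite minimisante dont la limite de Hausdorff est minimale.

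Première étape : partir d'une suite minimisante quelconque $(c_i^{(0)})$ de pseudo-variétés homologues à $V'$, avec $\vol(c_i^{(0)})\to\vol[V']$. Comme $K(V)$ est compact, $\mathcal K(K(V))$ l'est aussi ; par compacité séquentielle on extrait une sous-suite convergeant au sens de Hausdorff vers un compact $W_0\subset K(V)$. On ne change pas la notation : on dispose donc d'une suite minimisante convergeant vers un compact $W_0$.

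Deuxième étape : il faut « rétrécir » $W_0$ jusqu'à obtenir un compact minimal. On considère l'ensemble $\mathcal W$ des compacts $W\subset K(V)$ qui sont limite de Hausdorff d'une suite minimisante. Cet ensemble est non vide ($W_0\in\mathcal W$). On voudrait y appliquer le lemme de Zorn relativement à l'inclusion renversée : il faut montrer que toute chaîne totalement ordonnée (décroissante) $(W_\lambda)$ dans $\mathcal W$ admet une borne inférieure dans $\mathcal W$, à savoir $W_\infty=\bigcap_\lambda W_\lambda$. Le point crucial est de vérifier que $W_\infty$ est encore limite de Hausdorff d'une suite minimisante. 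Pour une chaîne dénombrable décroissante $W_0\supset W_1\supset\cdots$, où chaque $W_k$ est limite d'une suite minimisante $(c_i^{(k)})_i$, on utilise \textbf{R2} : $W_k\to W_\infty$ pour $\dist_{\mathcal H}$ ; puis on fait une extraction diagonale — pour chaque $k$ on choisit $i(k)$ assez grand pour que $\dist_{\mathcal H}(c_{i(k)}^{(k)},W_k)<1/k$ et $\vol(c_{i(k)}^{(k)})<\vol[V']+1/k$ ; la suite $(c_{i(k)}^{(k)})_k$ est alors minimisante et converge vers $W_\infty$ au sens de Hausdorff, donc $W_\infty\in\mathcal W$. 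Pour se ramener au cas dénombrable, on peut noter que $\mathcal K(K(V))$ est un espace métrique compact donc séparable, ce qui rend toute chaîne essentiellement dénombrable pour ce qui concerne son intersection ; alternativement on travaille directement avec une suite minimisante dont on contrôle le diamètre des composantes.

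Troisième étape : le lemme de Zorn fournit un élément minimal $W_\infty$ de $(\mathcal W,\supseteq)$. Par définition de $\mathcal W$, $W_\infty$ est limite de Hausdorff d'une suite minimisante $(V_i)$ de pseudo-variétés dans $K(V)$ ; et la minimalité dans $\mathcal W$ signifie exactement qu'aucune partie propre de $W_\infty$ n'est limite de Hausdorff d'une autre suite minimisante, c'est-à-dire que $W_\infty$ est minimale au sens de la définition. Cela conclut.

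Le principal obstacle est la deuxième étape : établir soigneusement que l'intersection d'une chaîne décroissante d'éléments de $\mathcal W$ reste dans $\mathcal W$ (c'est là qu'interviennent \textbf{R1}, \textbf{R2} et l'argument diagonal), et traiter proprement la question de la cardinalité des chaînes pour appliquer Zorn — ou bien la contourner en remarquant que, $\vol$ étant borné inférieurement et $K(V)$ compact, on peut se contenter de minimiser une fonctionnelle auxiliaire (par exemple $\sum 2^{-n}\dist_{\mathcal H}(\cdot,B_n)$ sur une base dénombrable d'ouverts) le long des suites minimisantes pour produire directement un compact minimal sans recourir à Zorn.
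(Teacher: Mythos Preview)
Your proposal is correct and follows essentially the same route as the paper: define $\mathcal W$ as the set of Hausdorff limits of minimizing sequences, verify that every totally ordered subfamily has a lower bound in $\mathcal W$ (reduction to a countable decreasing sequence, then a diagonal extraction), and conclude by Zorn. For the step you flagged as the main obstacle --- reducing an arbitrary chain $(W_\alpha)_{\alpha\in A}$ to a countable one with the same intersection --- the paper gives the concrete argument you were groping for via separability: for each $i$, the compact set $K(V)\setminus\tub(K_\infty,2^{-i})$ is covered by the open sets $K(V)\setminus W_\alpha$, so a finite subcover suffices, and since the chain is totally ordered the smallest of those finitely many $W_\alpha$'s already lies inside $\tub(K_\infty,2^{-i})$.
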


\begin{proof}[Démonstration]
Soit $\mathcal W$ l'ensemble des parties compactes de $K(V)$ qui sont limites de Hausdorff de suites minimisantes, muni de la relation d'ordre partielle $\subset$. On montre que c'est un ensemble inductif au sens suivant : toute partie totalement ordonnée de $\mathcal W$ admet un minorant dans $\mathcal W$. Le lemme de Zorn fournira alors un élément minimal $W_\infty$ de  $\mathcal W$.

Soit $\mathcal W_0$ une partie totalement ordonnée de $\mathcal W$. On écrit $\mathcal W_0=(W_\alpha)_{\alpha\in A}$. On considère :
$$K_\infty=\displaystyle\bigcap_{\alpha\in A}W_\alpha.$$

C'est une partie compacte de $K(V)$. On montre alors qu'il existe une suite $(W_i)_{i\in \N}$  dans $\mathcal W_0$, décroissante pour l'inclusion, telle  que :  
$$K_\infty=\displaystyle\bigcap_{i\in \N}W_i.$$

En effet, pour $i$ dans $\N$, notons $U_i$ le voisinage tubulaire ouvert de $K_\infty$ de rayon $2^{-i}$ dans $K(V)$. Fixons $i$ dans $\N$. Le complémentaire $F_i$ de $U_i$ dans $K(V)$ est un fermé de $K(V)$ : c'est donc un compact. De plus, comme $K_\infty\subset F_i$, on a :
$$F_i\subset \displaystyle\bigcup_{\alpha\in A}O_\alpha,$$
où $O_\alpha$ est le complémentaire de $W_\alpha$ dans $K(V)$. Par compacité, il existe une partie finie $B_i$ de $A$ telle que :
$$F_i\subset\displaystyle\bigcup_{\beta\in B_i}O_\beta.$$
Ainsi, $\displaystyle\bigcap_{\beta\in B_i}W_\beta\subset U_i$. 

Comme $B_i$ est une partie finie de $A$,  l'ensemble $\set{W_\beta}_{\beta\in B_i}$ admet un plus petit élément, que l'on note $W_i$. On a alors $W_i\in \mathcal W_0$ et $K_\infty\subset W_i\subset U_i$. 

Il en résulte que : $K_\infty=\displaystyle\bigcap_{i\in \N}W_i$. Comme on peut de plus imposer $B_i\subset B_{i+1}$ pour tout $i\in \N$, la suite $(W_i)_{i\in \N}$ est décroissante.

Montrons maintenant que $K_\infty\in \mathcal W$. Pour $j\in \N$, chaque $W_j$ est dans $\mathcal W$ : il existe donc une suite minimisante $(c_i^j)_{i\in \N}$ telle que :
$$c_i^j\underset{i\to +\infty}{\overset{H}{\longrightarrow}}W_j.$$

Pour tout $j$ dans $\N$, il existe $k_j\in \N$ tel que pour tout $i\ge k_j$ on ait :
$$\mod{\vol(c_i^j)-\vol[V']}\le 2^{-j}.$$

Puis, comme $c_i^j\overset{H}{\longrightarrow} W_j$, il existe $\ell_j\in \N$ tel que, pour tout $i\ge \ell_j$, on ait :

\begin{equation}
\label{eqlemmin}
	\dist_{\mathcal H}(c_i^j,W_j)\le 2^{-j}
\end{equation}

On construit alors une suite minimisante $(c'_j)$ en posant, pour tout $j$ dans $\N$ : $c'_j=c_{i_j}^j$ où $i_j=\max(k_j,\ell_j)$. 
Il s'agit bien d'une suite minimisante puisque pour tout $j$ entier naturel on a :
$$\mod{\vol(c'_j)-\vol[V']}\le 2^{-j}.$$

Puis, on sait que $W_i\overset{H}{\longrightarrow}K_\infty$ (c'est la propriété \textbf{R2}). Ainsi, pour tout $\eps>0$, il existe $j_0$ entier tel que, pour tout $j\ge j_0$, on ait :
$$\dist_{\mathcal H}(W_j,K_\infty)\le \eps.$$

D'après (\ref{eqlemmin}) on a : $\dist_{\mathcal H}(c_j',W_j)\le 2^{-j}$. Pour $j\ge j_1$ convenable, on a donc :
$$\dist_{\mathcal H}(c_j',K_\infty)\le\eps.$$
Ainsi, pour $j\ge \max(j_0,j_1)$, il vient, par inégalité triangulaire :
$$\dist_{\mathcal H}(c_j',K_\infty)\le\dist_{\mathcal H}(c_j',K_\infty)+\dist_{\mathcal H}(K_\infty,W_j)\le 2\eps$$
ce qui prouve que la suite $(c_j')$ converge au sens de Hausdorff vers $K_\infty$. Il en résulte que $K_\infty$ est un minorant dans $\mathcal W$ de $\mathcal W_0$. 
\end{proof}

On peut noter qu'il n'y a pas forcément unicité de la partie minimale compacte $W_\infty$ de $K(V)$. On aura besoin, pour la suite, des deux résultats suivants.

\begin{lemm}
\label{lem-analytique}
Soient $0\le \alpha\le \beta$ et $c>0$ des réels, ainsi que $a:[\alpha,\beta]\to \R_+^*$ et $v:[\alpha,\beta]\to \R^+$ des fonctions. On suppose que :
\begin{quote}
{H1.} Pour tout $R\in [\alpha,\beta]$ on a $v(R)\ge \dint_\alpha^R a(t)\:\text{d}t$;\\
{H2.} Pour tout $R\in [\alpha,\beta]$ on a $v(R)\le c[a(R)]^{\frac{n}{n-1}}$.
\end {quote}
Alors, pour tout $R$ dans $[\alpha,\beta]$, on a : $v(R)\ge \dfrac{1}{c^{n-1}n^n}(R-\alpha)^n$.
\end{lemm}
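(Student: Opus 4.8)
The plan is to reduce everything to a differential inequality for the function $v$ and then integrate it. First I would combine the two hypotheses: from H2 we get $a(R) \ge (v(R)/c)^{\frac{n-1}{n}}$ for every $R \in [\alpha,\beta]$. Feeding this into H1 (more precisely, into its differential consequence) gives a lower bound on the growth rate of $v$. The cleanest route is to observe that H1 says $v$ dominates the integral of $a$; since $a$ is positive, $v$ is (essentially) nondecreasing and, at points of differentiability, $v'(R) \ge a(R) \ge c^{-\frac{n-1}{n}} v(R)^{\frac{n-1}{n}}$. Wherever $v(R) > 0$ this rearranges to $\frac{d}{dR}\big(v(R)^{1/n}\big) \ge \frac{1}{n}\, c^{-\frac{n-1}{n}}$.

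Next I would integrate this from $\alpha$ to $R$. One subtlety: a priori $v$ need only satisfy $v(R) \ge \int_\alpha^R a(t)\,dt$, not an equality, so I cannot literally differentiate $v$. The standard way around this is to introduce the auxiliary function $w(R) = \int_\alpha^R a(t)\,dt$, which is genuinely (locally) absolutely continuous with $w'(R) = a(R)$ a.e.; from H1 we have $v \ge w$, and I would first prove the bound for $w$. Applying H2 in the form $a(R) \ge (v(R)/c)^{\frac{n-1}{n}} \ge (w(R)/c)^{\frac{n-1}{n}}$ turns the relation $w' = a$ into the differential inequality $w'(R) \ge c^{-\frac{n-1}{n}} w(R)^{\frac{n-1}{n}}$. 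Since $w(\alpha) = 0$, a short argument (compare with the solution of the ODE $y' = c^{-\frac{n-1}{n}} y^{\frac{n-1}{n}}$, $y(\alpha)=0$, namely $y(R) = \frac{1}{c^{n-1} n^n}(R-\alpha)^n$) gives $w(R)^{1/n} \ge \frac{1}{n}\,c^{-\frac{n-1}{n}}(R-\alpha)$, i.e.
$$
w(R) \ge \frac{1}{c^{n-1} n^n}(R-\alpha)^n .
$$
Then $v(R) \ge w(R)$ finishes it.

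The main obstacle I anticipate is purely the regularity bookkeeping at the point where $w$ vanishes: one must check that the differential inequality can be integrated down to $\alpha$ even though $w(\alpha)=0$ makes the inequality degenerate there (the right-hand side vanishes too). The clean fix is to work on $[\alpha+\delta,\beta]$ where $w(\alpha+\delta)>0$ — if $w$ is identically zero on a subinterval $[\alpha,R_0]$ the claimed bound is trivially true there since $(R-\alpha)^n$ would have to be... no, one should instead note that either $w\equiv 0$ on all of $[\alpha,\beta]$ (then by H2 $v\equiv 0$ forces... one checks the inequality still must be read correctly) — the honest statement is that once $w(R_0)>0$ for some $R_0$, monotonicity keeps $w>0$ on $[R_0,\beta]$, one integrates $\frac{d}{dR} w^{1/n} \ge \frac{1}{n} c^{-(n-1)/n}$ on $[R_0,R]$, and lets $R_0 \downarrow \inf\{t : w(t)>0\}$, using continuity of $w$ to pass to the limit. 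Everything else is a routine one-variable calculus computation that I would not spell out in detail.
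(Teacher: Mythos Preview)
Your proposal is correct and follows essentially the same route as the paper: introduce $w(R)=\int_\alpha^R a(t)\,dt$ (the paper calls it $A$), combine H1 and H2 to get $w(t)\le v(t)\le c\,[w'(t)]^{n/(n-1)}$, rewrite this as $\tfrac{d}{dt}\big(w^{1/n}\big)\ge \tfrac{1}{n}c^{-(n-1)/n}$, integrate from $\alpha$ to $R$, and conclude via $v\ge w$. Your regularity worries are largely unnecessary here: since by hypothesis $a$ takes values in $\R_+^*$, one has $w(t)>0$ for every $t>\alpha$, so the division and the integration are legitimate on $(\alpha,R]$ and the limit $R_0\downarrow\alpha$ is immediate.
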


\begin{proof}[Démonstration]
Pour tout $R$ dans $[\alpha,\beta]$, on pose $A(R)=\dint_\alpha^R a(t)\:\text{d}t$. Pour tout $t$ dans $[\alpha,\beta]$, selon  \textit{H1} et \textit{H2}, on a  : $A(t)\le  c[A'(t)]^{\frac{n}{n-1}}$, ce qui amène :
$$\dfrac{A'(t)}{A(t)^{\frac{n-1}{n}}}\ge\left( \dfrac{1}{c}\right)^{\frac{n-1}{n}}.$$ 
En intégrant, pour $R$ dans $[\alpha,\beta]$, on obtient : $$n \left[A(R)^{\frac{1}{n}}-A(\alpha)^{\frac{1}{n}}\right]\ge \left( \dfrac{1}{c}\right)^{\frac{n-1}{n}}(R-\alpha).$$
Comme $A(\alpha)\ge 0$, il vient, pour tout $R\in [\alpha,\beta]$ : $A(R)\ge \dfrac 1{n^nc^{n-1}}(R-\alpha)^n$.

On utilise  alors \textit{{H1}} pour conclure.
 \end{proof}

\begin{theo} [Inégalité d'Eilenberg]\index{Inégalité d'Eilenberg} \textit{Soient $X$ un espace métrique et $z$ un cycle de dimension $n$ dans $X$. Il existe une constante $E_n$, qui ne dépend que de la dimension $n$, telle que pour toute  application 1-lipschitzienne $f:X\to \R$ on ait :
$$\vol(z\cap\left\{f\le r\right\}) \ge \dint_0^r\vol(z\cap\left\{f=t\right\})\d t,$$
pour presque tout $r$ dans $\R$. De plus, 
$z\cap\left\{f=r\right\}$ est un cycle de dimension $n-1$ pour presque tout $r\in \R$.}
\end{theo}

Pour ce résultat, je renvoie à \cite{Gromov-FRM} page 21 et \cite{BZ} page 101. On va utiliser ce résultat avec $X=K(V)$ et la fonction $f=\dist(\ast,.)$ où $\ast$ est une partie de $K(V)$. L'hypothèse {\textit{H1}} du lemme \ref{lem-analytique} est en fait l'inégalité d'Eilenberg.\\

\label{defW0}
Le lemme  \ref{partieminimale} nous autorise à prendre une suite minimisante $(V_i)$ (pour $\vol[V']$) de pseudo-variétés dans $K(V)$, qui converge pour la distance de Hausdorff vers une partie compacte minimale $W_\infty$ de $K(V)$.


Fixons maintenant $a\in ]0,1[$ et $\rho>0$. On va alors démontrer le résultat suivant.\\
\begin{lemm}
\label{lemmevoisinagetub}
Il existe une constante réelle $A_n>0$ (indépendante de $a$ et $\rho$), et une extraction $\varphi:\N \to \N$  telle que, pour tout $i$ dans $\N$, pour tout $v$ dans $V_{\varphi(i)}$ et tout $R$ dans $[a,1]$ on ait :
$$\vol(c_{\varphi(i)}(R))\ge A_n (R-a)^n$$
où $c_i(R)=V_i\cap \tub(B_i(v,R),\rho)$, $\tub(B_i(v,R),\rho)$ désignant le voisinage tubulaire de rayon $\rho$ de la boule $B_i(v,R)$ de $V_i$ dans $K(V)$.\\
\end{lemm}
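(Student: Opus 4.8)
The idea is to run a differential-inequality argument on each pseudo-variety $V_i$ in the minimizing sequence, using the isoperimetric inequality in $K(V)$ (Theorem \ref{theo-in�galit�isocomplexes}) together with the Eilenberg coarea inequality and the analytic Lemma \ref{lem-analytique}. Fix $v\in V_i$ and set $c_i(R)=V_i\cap\tub(B_i(v,R),\rho)$, a cycle relative to its boundary; write $v_i(R)=\vol(c_i(R))$ and let $a_i(R)=\vol\big(V_i\cap\{\dist(\cdot,\tub(B_i(v,R_0),\rho))=R-R_0\}\big)$ (or the derivative of $v_i$), the $(n-1)$-volume of the "sphere at radius $R$". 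The function $f=\dist(B_i(v,a),\rho\text{-tube})(\cdot)$ — more precisely $f(x)=\dist(x,\tub(B_i(v,a),\rho))$ — is $1$-Lipschitz on $K(V)$, so the Eilenberg inequality gives hypothesis \emph{H1} of Lemma \ref{lem-analytique}: $v_i(R)\ge\int_a^R a_i(t)\,\d t$ for a.e.\ $R\in[a,1]$, and the slices $z_t=V_i\cap\{f=t\}$ are $(n-1)$-cycles for a.e.\ $t$.

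**The role of minimality.** The heart of the matter is producing hypothesis \emph{H2}: $v_i(R)\le c\,[a_i(R)]^{\frac{n}{n-1}}$. Here I would use that $(V_i)$ is (essentially) minimizing for $\vol[V']$ and converges to the \emph{minimal} compact $W_\infty$. The point is that if a spherical slice $z_t$ had very small $(n-1)$-volume, then by Theorem \ref{theo-in�galit�isocomplexes} applied to $z_t$ (legitimate once $\vol(z_t)\le\alpha_{n-1}$, i.e.\ for $i$ large along a subsequence, using $v_i(1)\to\vol[V']$ which is bounded) it would bound a small $n$-chain $w_t$ contained in a thin tubular neighbourhood of $z_t$. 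Replacing the "inside" part $c_i(t)$ of $V_i$ by this filling $w_t$ yields a new pseudo-variety homologous to $V_i$ in $K(V)$; the tube-of-radius-$\eps_{n-1}$ control in Theorem \ref{theo-in�galit�isocomplexes}, together with $t\le 1<\tfrac12\sys$ and the distance estimates of Lemma \ref{lemmecomplexe}, keeps this surgery from creating topology or changing the homology class, and keeps the relative systole bounded below (Theorem \ref{theosyst}). If $v_i(t)$ were not controlled by $\vol(w_t)\le\beta_{n-1}(\vol z_t)^{\frac{n}{n-1}}$, the surgered cycle would have strictly smaller volume — contradicting minimality of $\vol[V']$ in the limit, or contradicting that $W_\infty$ is minimal (no proper subset is a Hausdorff limit of a minimizing sequence). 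This is where the extraction $\varphi$ enters: one passes to a subsequence along which the relevant slices are small enough to apply the isoperimetric inequality uniformly, and along which the comparison with $W_\infty$ is quantitative.

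**Conclusion via the analytic lemma.** Once \emph{H1} and \emph{H2} hold on $[a,1]$ for $v_{\varphi(i)}$, with the constant $c=\beta_{n-1}$ depending only on $n$ (via the constant $C_{n-1}$ of Gromov's filling inequality), Lemma \ref{lem-analytique} gives directly
$$\vol(c_{\varphi(i)}(R))\ge\frac{1}{\beta_{n-1}^{\,n-1}n^n}(R-a)^n,$$
so one may take $A_n=\dfrac{1}{\beta_{n-1}^{\,n-1}n^n}$, which indeed depends only on the dimension. The estimate is uniform in $v\in V_{\varphi(i)}$ and independent of $a$ and $\rho$ because the constants in Theorem \ref{theo-in�galit�isocomplexes} and in Lemma \ref{lem-analytique} are.

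**Main obstacle.** The delicate point is establishing \emph{H2} rigorously: one must check that the surgery replacing $c_i(t)$ by the isoperimetric filling of the slice $z_t$ genuinely produces an element of $C([V'])$ — i.e.\ an embedded pseudo-variety homologous to $V'$, not merely a singular chain — and that doing this simultaneously for a full-measure set of radii (or for a well-chosen single radius minimizing $a_i(t)$) is compatible with the convergence to $W_\infty$. Controlling that the thin filling does not meet the "outside" part $V_i\setminus c_i(t)$ except along $z_t$ uses that the filling lives in $\tub(z_t,\eps_{n-1})$ with $\eps_{n-1}=\beta_{n-1}(\vol z_t)^{1/n}$ small, combined with the cubical-distance estimates; making this quantitative and uniform in $i$ along the extracted subsequence is the technical crux.
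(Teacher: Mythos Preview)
Your overall architecture is right --- Eilenberg gives \emph{H1}, Lemma~\ref{lem-analytique} converts \emph{H1}+\emph{H2} into the volume lower bound, and minimality of $W_\infty$ is the lever that punishes surgeries --- but the argument for \emph{H2} has a genuine gap. You want to show, for every $v\in V_i$ and every radius $t\in[a,1]$, that $v_i(t)\le\beta_{n-1}\,[\vol z_t]^{n/(n-1)}$, arguing that otherwise the isoperimetric filling $w_t$ of $z_t$ has $\vol(w_t)<v_i(t)$ and the surgered cycle contradicts something. The problem is that this surgery cannot be performed unless $\vol(z_t)\le\alpha_{n-1}$, and nothing in the minimizing hypothesis forces $\vol(z_t)$ to be small at an arbitrary radius; the bound $v_i(1)\le\vol[V']+o(1)$ does not help here. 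Even when the surgery is available, the ``contradicts $\vol[V']$'' branch only yields the \emph{approximate} inequality $v_i(t)\le\volremp(z_t)+\big(\vol(V_i)-\vol[V']\big)$, while the ``contradicts minimality of $W_\infty$'' branch needs the filling $w_t$ to sit in a tube of radius $\eps_{n-1}=\beta_{n-1}\,[\vol z_t]^{1/(n-1)}$ strictly smaller than $\rho$ so that the Hausdorff limit of the surgered sequence genuinely misses $\lim v_i$ --- again requiring $\vol(z_t)$ small, which you have not secured.

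The paper resolves this by reversing the logic: it argues by contradiction on the \emph{conclusion}, assuming that for every candidate constant $A>0$ there are (for all large $i$) centers $v_i$ and radii $R_i\in[a,1]$ with $\vol(c_i(R_i))<A(R_i-a)^n\le A$. Since \emph{H1} holds (Eilenberg) and the conclusion of Lemma~\ref{lem-analytique} fails, \emph{H2} must fail at some $r_i\in[a,R_i]$; this is inequality~(\ref{eq10}), which says $\vol(c_i(r_i))>(An^n)^{-1/(n-1)}[\vol z_i(r_i)]^{n/(n-1)}$. The point is that $A$ is now a \emph{free} small parameter: one reads off $\vol(c_i(r_i))\le A$ and $\vol(z_i(r_i))\le An$, so for $A$ chosen below the thresholds (\ref{eq-A1})--(\ref{eq-A3}) the isoperimetric filling $C_i$ exists, the surgered $V_i'=V_i-c_i(r_i)+C_i$ is homologous to $V'$, and a second inner contradiction (surgery against $\vol[V']$) forces $\vol(z_i(r_i))\to 0$. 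Only then does one know $\eps_i\to 0$, which is exactly what makes Lemma~\ref{lemmedoigts} go through and produce a proper sub-limit of $W_\infty$.

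In short: do not try to verify \emph{H2} directly. Instead, negate the lemma, pick $A$ small enough that $(An^n)^{-1/(n-1)}>\beta_{n-1}$ and the thresholds of Theorem~\ref{theo-in�galit�isocomplexes} are met, use the contrapositive of Lemma~\ref{lem-analytique} to locate $r_i$, and then run the surgery/Hausdorff argument. Your proposed constant $A_n=(\beta_{n-1}^{\,n-1}n^n)^{-1}$ is exactly the threshold for $A$ that emerges from this scheme.
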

On raisonne par l'absurde. On suppose donc que :  
\begin{quote}
\label{propriété-P}
$\underline{(\mathcal P)}$ \textit{Pour tout réel $A>0$, il existe $i_0\in \N$ tel que, pour tout $i\ge i_0$, on ait l'existence de $v_i$ dans $V_i$ et de $R_i\in[a,1]$ pour lesquels :
$$\vol(c_i(R_i))< A(R_i-a)^n$$
où $c_i(R_i)=V_i\cap \tub(B_i(v,R_i),\rho)$}. \\
\end{quote}

Pour aboutir à une contradiction, on va construire  une suite minimisante de cycles pour le volume de la classe d'homologie $[V']$ qui va converger vers une partie propre de $W_\infty$, ce qui contredira le caractère minimal de $W_\infty$.

Pour simplifier la lecture de ce qui suit, notons, pour tout $t\ge 0$ et $i\ge i_0$ dans $\N$ :


$$c_i(t)=V_i\cap \tub(B_i(v_i,t),\rho)\ \ \mbox{et}\ \ z_i(t)=\partial c_i(t).$$

Fixons provisoirement $A>0$. Comme la propriété $(\mathcal P)$ est supposée vraie, il existe une suite $(r_i)$ dans $[a,R_i]$ telle que :
\begin{equation}
	\label{eq10}
	\vol(c_i(r_i))> \left(\dfrac{1}{An^n}\right)^{\tfrac 1{n-1}}\vol(z_i(r_i))^{\frac n{n-1}}
\end{equation}
où $z_i(r_i)=\partial c_i(r_i)$. 
En effet, cela résulte du lemme \ref{lem-analytique}, appliqué aux réels $\alpha=a$, $\beta=R_i$ et aux fonctions $a(t)=\vol(z_i(t))$ et $v(t)=\vol(c_i(t))$.

Il vient alors :
\eq
\vol(z_i(r_i))&<&\left(\left(An^n\right)^{\tfrac 1{n-1}}\vol(c_i(r_i))\right)^{\frac{n-1}n}\\
&\le& A^{\tfrac 1n}n\:\vol(c_i(r_i))^{\frac {n-1}{n}}\le  A^{\tfrac 1n}n\:\vol(c_i(R_i))^{\frac {n-1}{n}}\\
&\le& A^{\tfrac 1n}n\:A^{\frac {n-1}{n}}R_i^{\frac {n-1}{n}}\le An.
\fineq

Supposons maintenant que  :
\begin{equation}
\label{eq-A1}	A n\le \alpha_{n-1},
\end{equation}
où $\alpha_{n-1}$ est une des deux constantes définies par le théorème \ref{theo-inégalitéisocomplexes} (voir page \pageref{theo-inégalitéisocomplexes}).

En vertu de ce théorème, il existe, pour tout $i\ge i_0$, une chaîne $C_i$ de dimension $n$ dans $K(V)$, dont le bord est $z_i(r_i)$, qui est contenue dans le voisinage tubulaire de rayon $\eps_i=\beta_{n-1}\:\vol(z_i(r_i))^{\frac 1{n-1}}$ et qui vérifie :

\begin{equation}
\label{1trefle}
	\vol(C_i)\le \beta_{n-1} \vol(z_i(r_i))^{\frac{n}{n-1}} 
\end{equation}

$\ $

On considère maintenant la suite de cycles : $$V_i'=V_i-c_i(r_i)+C_i\ \ \ (i\ge i_0).$$

On va démontrer qu'il s'agit d'une suite minimisante pour le volume de la classe d'homologie $[V']$ qui (à une extraction près) converge (pour la distance de Hausdorff) vers une partie propre de $W_\infty$. 

Tout d'abord, pour $i\ge i_0$ et pour $A$ petit, les $V_i'$ sont homologues à $V'$. En effet, pour tout $i\ge i_0$, $\partial c_i(r_i)=\partial C_i=z_i(r_i)$ donc $C_i-c_i(r_i)$ est un cycle de dimension $n$ dans $K(V)$. Puis
on a pour $i\ge i_0$, en vertu de la propriété ${(\mathcal P)}$ page \pageref{propriété-P}, $\vol(c_i(r_i))\le A$. Ainsi, pour $i\ge i_0$ : 
\eq
\vol (C_i-c_i(r_i))&\le&\vol(c_i(r_i))+\vol(C_i)\le A+\beta_{n-1}\vol (z_i(r_i))^{\frac n{n-1}}\\
&\le &A+\beta_{n-1}\left(An^n\right)^{\tfrac 1{n-1}}\vol(c_i(r_i))\le A+\beta_{n-1}(An)^{\frac n{n-1}}.
\fineq
Pour 

\begin{equation}
\label{eq-A2}
	A+\beta_{n-1}(An)^{\frac n{n-1}}\le \alpha_n,
\end{equation}
 $C_i-c_i(r_i)$ est un bord selon le théorème \ref{theo-inégalitéisocomplexes}.
\begin{lemm}
Pour un choix convenable de la constante $A$, on a : $$\underset{i\to +\infty}{\lim}\vol(V_i')=\vol[V'].$$
\end{lemm}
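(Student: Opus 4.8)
The plan is to show that the chain $V_i'=V_i-c_i(r_i)+C_i$ has volume converging to $\vol[V']$ by sandwiching: on one hand $\vol(V_i')$ is bounded below by $\vol[V']$ (up to a vanishing error) because $V_i'$ is, for $A$ small enough, a pseudo-variety homologous to $V'$ in $K(V)$, hence an element of $C([V'])$, and $\vol[V']$ is by definition the infimum over that class; on the other hand $\vol(V_i')$ is bounded above by $\vol(V_i)$ plus a term that we will drive to $0$ by choosing $A$ appropriately. Since $(V_i)$ is minimizing, $\vol(V_i)\to\vol[V']$, and the upper bound will then force $\limsup_i\vol(V_i')\le\vol[V']+(\text{small})$. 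Letting the auxiliary constant $A$ tend to $0$ (or, more precisely, repeating the construction along a diagonal sequence $A=A^{(k)}\to 0$) yields the claimed limit.

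First I would record the elementary volume estimate. By additivity of volume on chains, $\vol(V_i')\le\vol(V_i)-\vol(c_i(r_i))+\vol(C_i)+(\text{overlap correction})$; more carefully, since $V_i'$ agrees with $V_i$ outside the region affected by the surgery, $\vol(V_i')\le\vol(V_i)+\vol(C_i)$ and also $\vol(V_i')\ge\vol(V_i)-\vol(c_i(r_i))$. The key point is the bound on the error term $\vol(C_i)$. From inequality (\ref{1trefle}) we have $\vol(C_i)\le\beta_{n-1}\vol(z_i(r_i))^{\frac{n}{n-1}}$, and from the chain of inequalities established just before the lemma, $\vol(z_i(r_i))<An$, so $\vol(C_i)\le\beta_{n-1}(An)^{\frac{n}{n-1}}$; likewise $\vol(c_i(r_i))\le A$ by property $(\mathcal P)$. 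Hence $\bigl|\vol(V_i')-\vol(V_i)\bigr|\le A+\beta_{n-1}(An)^{\frac{n}{n-1}}$, a bound that is uniform in $i\ge i_0$ and tends to $0$ as $A\to 0$.

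Then I would assemble the argument. Fix $\delta>0$. Choose $A=A(\delta)$ small enough that $A$ satisfies simultaneously (\ref{eq-A1}), (\ref{eq-A2}), and $A+\beta_{n-1}(An)^{\frac{n}{n-1}}<\delta$; with this choice the $V_i'$ (for $i\ge i_0(A)$) are pseudo-varieties homologous to $V'$ in $K(V)$, so $\vol(V_i')\ge\vol[V']$. Combining with the upper estimate and with $\vol(V_i)\to\vol[V']$, we get $\vol[V']\le\vol(V_i')\le\vol(V_i)+\delta$, so $\limsup_{i\to+\infty}\vol(V_i')\le\vol[V']+\delta$. Since $\delta$ was arbitrary (formally: extract a diagonal subsequence over a sequence $\delta_k\to 0$, which is harmless as we only need \emph{some} minimizing sequence converging to a proper part of $W_\infty$), this yields $\lim_i\vol(V_i')=\vol[V']$.

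The main obstacle I anticipate is the bookkeeping in the volume additivity step, specifically making rigorous that the surgery $V_i\mapsto V_i-c_i(r_i)+C_i$ does not create hidden overlaps or cancellations that would invalidate $\vol(V_i')\le\vol(V_i)+\vol(C_i)$; one must use that $c_i(r_i)=V_i\cap\tub(B_i(v_i,r_i),\rho)$ is literally a subchain of $V_i$ (so removing it genuinely subtracts its volume) and that $C_i$ is counted with its own volume regardless of position. A secondary, mostly formal point is ensuring $V_i'$ is again a \emph{pseudo-variety} (not merely a cycle) so that it lies in $C([V'])$ and the infimum definition of $\vol[V']$ applies; this is where we invoke Theorem \ref{theo-in�galit�isocomplexes} together with the smallness conditions (\ref{eq-A1}) and (\ref{eq-A2}) on $A$, which guarantee the filling $C_i$ exists inside $K(V)$ and stays in a small tubular neighbourhood of $z_i(r_i)$, so the modification is local and preserves the pseudo-variety structure.
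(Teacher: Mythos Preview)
Your sandwiching idea is natural, but it does not prove the lemma as stated. For a \emph{fixed} $A$, your estimates only give
\[
\vol[V']\;\le\;\vol(V_i')\;\le\;\vol(V_i)+\beta_{n-1}(An)^{\frac{n}{n-1}},
\]
hence $\limsup_i\vol(V_i')\le\vol[V']+\beta_{n-1}(An)^{n/(n-1)}$ with a nonzero gap. Diagonalising over $A_k\to 0$ produces a \emph{different} sequence: the points $v_i$, the radii $R_i$, $r_i$, and hence the caps $C_i$, all depend on $A$ through property $(\mathcal P)$. So the diagonal trick cannot yield $\lim_i\vol(V_i')=\vol[V']$ for the sequence $(V_i')$ attached to one chosen $A$; yet this is exactly what the lemma asserts, and what the next lemma (``pas de doigt'') uses when it invokes $\eps_i=\beta_{n-1}\vol(z_i(r_i))^{1/(n-1)}\to 0$ for that very sequence.

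The missing idea is to exploit that $(V_i)$ is \emph{minimizing}. One first shows, by contradiction, that $\vol(c_i(r_i))-\volremp\bigl(z_i(r_i)\subset K(V)\bigr)\to 0$: if along a subsequence this difference stayed $\ge\gamma>0$, replacing $c_i(r_i)$ by a near-optimal filling $W_i$ of $z_i(r_i)$ would give cycles $V_i''$ homologous to $V'$ with $\vol(V_i'')\le\vol(V_i)-\gamma/2$, eventually below $\vol[V']$, which is impossible. One then combines the lower bound (\ref{eq10}), namely $\vol(c_i(r_i))>(An^n)^{-1/(n-1)}\vol(z_i(r_i))^{n/(n-1)}$, with the isoperimetric upper bound $\volremp(z_i(r_i))\le\beta_{n-1}\vol(z_i(r_i))^{n/(n-1)}$. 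Choosing $A$ so that $(An^n)^{-1/(n-1)}>\beta_{n-1}$ (this is the ``choix convenable''), the difference above dominates a positive multiple of $\vol(z_i(r_i))^{n/(n-1)}$, forcing $\vol(z_i(r_i))\to 0$. From there $\vol(C_i)\to 0$ and $\vol(c_i(r_i))\to 0$, and $\vol(V_i')\to\vol[V']$ follows for that fixed $A$. Your a priori bounds $\vol(z_i(r_i))<An$ and $\vol(c_i(r_i))\le A$ are correct but too weak: they are constants in $i$, and nothing in your argument makes them decay.
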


\begin{proof}[Démonstration] Montrons tout d'abord que :$$\vol(c_i(r_i))-\volremp\big(z_i(r_i)\subset K(V)\big)\underset{i\to +\infty}{\longrightarrow}0.$$
 On raisonne par l'absurde. Supposons que, quitte à extraire, on ait, pour tout $i\ge i_0$, $\vol(c_i)-\text{Vol\:Remp}\big(z_i(r_i)\subset K(V)\big)\ge \gamma>0$. 
On considère la suite de cycles $V_i''=V_i-c_i(r_i)+W_i$, où $W_i$ est une chaîne de $K(V)$ telle que $\vol(W_i)\le \text{Vol\:Remp}\big(z_i(r_i)\subset K(V)\big)+\tfrac{\gamma}{2}$ et $\partial W_i=z_i$. On peut d'ailleurs supposer que $\vol(W_i)\le \vol(c_i(r_i))$

Pour $i\ge i_0$, chaque $V_i''$ est encore homologue à $V'$, à condition que $A$ soit suffisamment petit. En effet, pour $i\ge i_0$, $W_i-c_i(r_i)$ est un cycle de dimension $n$ dans $K(V)$, et on a (grâce à la propriété ${(\mathcal P)}$): 
$$
\vol(W_i-c_i(r_i))\le \vol(c_i(r_i))+\vol(W_i)\le 2A.
$$
Ainsi, toujours par le théorème \ref{theo-inégalitéisocomplexes}, le cycle $W_i-c_i(r_i)$ est un bord dès que :

\begin{equation}
\label{eq-A3}
A\le \dfrac{\alpha_n}2.
\end{equation}
Or
\begin{eqnarray*}
\vol(V_i'')&\le& \vol(V_i-c_i(r_i))+\vol(W_i)\\
&\le&\vol(V_i)-\vol (c_i(r_i))+\volremp\big(z_i(r_i)\subset K(V)\big)+\tfrac{\gamma}{2}\\
&\le&\vol(V_i)-\gamma+\tfrac{\gamma}{2}\le\vol(V_i)-\tfrac{\gamma}{2}.
\end{eqnarray*}
Il en résulte que, pour $i$ grand, $\vol (V_i'')<\vol [V']$, ce qui contredit la définition de $\vol[V']$. On a donc  :

\begin{equation}
\label{2trefle}
	\vol(c_i(r_i))-\volremp\big(z_i(r_i)\subset K(V)\big)\underset{i\to +\infty}{\longrightarrow}0
\end{equation}

Puis, pour tout $i\ge i_0$, on a :
\eq
\lefteqn{\vol(c_i(r_i))-\volremp\big(z_i(r_i)\subset K(V)\big)\ge}\\&& \dfrac{1}{(An^n)^{\tfrac 1{n-1}}}\:\vol(z_i(r_i))^{\frac n{n-1}}-\beta_{n-1} \vol(z_i(r_i))^{\frac{n}{n-1}}\\
&\ge &\left(\left(\dfrac{1}{An^n}\right)^{\tfrac 1{n-1}}- \beta_{n-1} \right)\vol(z_i(r_i))^{\frac{n}{n-1}}.
\fineq

On se demande alors s'il est loisible de choisir $A$ de sorte que (\ref{eq-A1}),  (\ref{eq-A2}) et  (\ref{eq-A3}) soient vraies avec la contrainte :
$$\left(\dfrac{1}{An^n}\right)^{\tfrac 1{n-1}}- \beta_{n-1} >0.$$
Mais on a :
$$
\left(\dfrac{1}{An^n}\right)^{\tfrac 1{n-1}}- \beta_{n-1} >0\Leftrightarrow  \dfrac{1}{An^n}> \beta_{n-1}^{n-1} 
 \Leftrightarrow  \dfrac{1}{\beta_{n-1}^{n-1} n^n}> A.
$$
\label{choixA}

Ainsi, on peut choisir $A$ tel que $\left(\dfrac{1}{An^n}\right)^{\tfrac 1{n-1}}- \beta_{n-1} >0$ de sorte que (\ref{eq-A1}),  (\ref{eq-A2}) et  (\ref{eq-A3}) soient vraies. Or, pour tout $i\ge i_0$, on a :
\eq
\lefteqn{\vol(c_i(r_i))-\volremp\big(z_i(r_i)\subset K(V))\big)\ge}\\&& 
\left(\left(\dfrac{1}{An^n}\right)^{\tfrac 1{n-1}}- \beta_{n-1} \right)\vol(z_i(r_i))^{\frac{n}{n-1}}\ge 0.
\fineq
Par sandwich, avec (\ref{2trefle}), on obtient $\vol(z_i(r_i))\tend 0$.

Selon (\ref{1trefle}), on obtient $\vol(C_i)\tend 0$ et $\volremp\big(z_i(r_i)\subset K(V)\big)\tend 0$. Selon  (\ref{2trefle}), on peut conclure que $\vol(c_i(r_i))\tend 0$. 
\end{proof}

\begin{lemm}
\label{lemmedoigts}
\textbf{Ou pourquoi il n'y a pas de doigt}. Quitte à extraire, la suite $(V'_i)$ converge pour la distance de Hausdorff vers une partie compacte propre $W_1$ de $W_\infty$.
\end{lemm}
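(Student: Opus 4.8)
Le plan comporte trois temps : extraire une sous-suite convergeant au sens de Hausdorff, v\'erifier que sa limite $W_1$ est incluse dans $W_\infty$, puis --- c'est le c\oe ur de l'argument --- exhiber un point de $W_\infty$ dont les cha\^ines $V_i'$ restent uniform\'ement \'eloign\'ees. On aura alors $W_1\subsetneq W_\infty$, en contradiction avec la minimalit\'e de $W_\infty$ ; cela r\'efutera la propri\'et\'e $(\mathcal P)$ et ach\`evera la preuve du lemme \ref{lemmevoisinagetub}. Dans ce qui suit, $|c|$ d\'esigne le support d'une cha\^ine $c$ (ainsi $|c_{V_i}|=V_i$).

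Comme $K(V)$ est compact, l'espace $(\mathcal K(K(V)),\dist_{\mathcal H})$ l'est aussi ; quitte \`a extraire --- et, si besoin, \`a remplacer chaque $V_i'$ par une pseudo-vari\'et\'e repr\'esentant la m\^eme classe, de volume non sup\'erieur et de support inclus dans $|V_i'|$ --- on peut supposer que $(V_i')$ converge au sens de Hausdorff vers une partie compacte $W_1\subset K(V)$. Puisque $V_i'=V_i-c_i(r_i)+C_i$, on a $|V_i'|\subset |V_i-c_i(r_i)|\cup |C_i|$ ; or $V_i-c_i(r_i)$ est une sous-cha\^ine de $c_{V_i}$, donc $|V_i-c_i(r_i)|\subset V_i$, et $C_i$ est contenue dans le $\eps_i$-voisinage de $z_i(r_i)=\partial c_i(r_i)$ (par l'in\'egalit\'e isop\'erim\'etrique dans $K(V)$), avec $\eps_i=\beta_{n-1}\vol(z_i(r_i))^{\frac 1{n-1}}\to 0$ et $|z_i(r_i)|\subset V_i$. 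Ainsi $|V_i'|\subset \tub(V_i,\eps_i)$. Pour $x\in W_1$, la propri\'et\'e \textbf{R1} appliqu\'ee \`a $V_i'\to W_1$ fournit des points $x_i\in |V_i'|$ avec $x_i\to x$ ; en choisissant $y_i\in V_i$ tel que $\dist(x_i,y_i)<\eps_i$ on obtient $y_i\to x$, donc $x\in W_\infty$ par \textbf{R1} appliqu\'ee \`a $V_i\to W_\infty$. D'o\`u $W_1\subset W_\infty$.

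Montrons $W_1\neq W_\infty$. L'observation cl\'e est que, $v_i$ \'etant le \emph{centre} de la boule $B_i(v_i,r_i)$, tout $x\in V_i$ avec $\dist(x,v_i)<\rho$ appartient \`a $\tub(B_i(v_i,r_i),\rho)$ ; par suite, $V_i-c_i(r_i)$ (support\'ee dans l'adh\'erence de $V_i\setminus\tub(B_i(v_i,r_i),\rho)$) est support\'ee dans le ferm\'e $\set{x\in V_i\mid\dist(x,v_i)\ge\rho}$. Comme $|z_i(r_i)|=|\partial(V_i-c_i(r_i))|\subset |V_i-c_i(r_i)|$, le cycle $z_i(r_i)$ est aussi \`a distance $\ge\rho$ de $v_i$ ; donc $|C_i|\subset \tub(|z_i(r_i)|,\eps_i)$ est \`a distance $\ge\rho-\eps_i$ de $v_i$, et finalement $\dist(v_i,|V_i'|)\ge\rho-\eps_i\ge\tfrac\rho2$ pour $i$ assez grand (puisque $\eps_i\to 0$). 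Quitte \`a extraire encore, supposons $v_i\to v_\infty$ ; comme $v_i\in V_i$ et $V_i\to W_\infty$, on a $v_\infty\in W_\infty$ par \textbf{R1}. Si l'on avait $v_\infty\in W_1$, \textbf{R1} appliqu\'ee \`a $V_i'\to W_1$ fournirait $x_i\in |V_i'|$ avec $x_i\to v_\infty$, et alors $\dist(v_i,x_i)\le\dist(v_i,v_\infty)+\dist(v_\infty,x_i)\to 0$, ce qui contredit $\dist(v_i,|V_i'|)\ge\tfrac\rho2$. Donc $v_\infty\in W_\infty\setminus W_1$, et $W_1\subsetneq W_\infty$.

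Le point d\'elicat est ce troisi\`eme temps, l'excision : il faut simultan\'ement confiner le support de la cha\^ine de remplissage $C_i$ --- qui pourrait a priori s'\'etaler n'importe o\`u dans $K(V)$ --- au $\eps_i$-voisinage de $z_i(r_i)$, ce qu'assure pr\'ecis\'ement l'in\'egalit\'e isop\'erim\'etrique dans $K(V)$ (avec $\eps_i\to 0$), et repousser $z_i(r_i)$ \`a distance au moins $\rho$ de $v_i$ ; l'argument d\'ecisif pour ce dernier point est simplement que $v_i$ est le centre de $B_i(v_i,r_i)$, de sorte que tout le $\rho$-voisinage de $v_i$ dans $K(V)$ est absorb\'e par la boule \'epaissie $\tub(B_i(v_i,r_i),\rho)$, donc retir\'e lors de la chirurgie $V_i\mapsto V_i'$. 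Les deux premiers temps, en revanche, ne sont que des v\'erifications reposant sur les propri\'et\'es \textbf{R1} et \textbf{R2} de la distance de Hausdorff.
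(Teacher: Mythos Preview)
Your proof is correct and follows essentially the same approach as the paper's: extract a Hausdorff-convergent subsequence, show $W_1\subset W_\infty$ via the containment $|V_i'|\subset\tub(V_i,\eps_i)$, and exclude the limit $v_\infty$ of the centers $v_i$ from $W_1$ using $\dist(v_i,|V_i'|)\ge\rho-\eps_i$. Your presentation is in places slightly more streamlined --- for instance, the inclusion $W_1\subset W_\infty$ via a single tube estimate rather than a case split on $C_i$ versus $V_i\setminus c_i(r_i)$ --- but the key ideas and the structure are the same.
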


\begin{proof}[Démonstration du lemme \ref{lemmedoigts}]
Quitte à extraire, la suite $(V'_i)$ converge pour la distance de Hausdorff vers une partie compacte $W_1$ de $K(V)$. On exhibe alors un point $w\in W_\infty$ qui n'est limite d'aucune suite $(w_i)$ telle que $w_i\in V_i'$ pour tout $i\ge i_0$. Rappelons que $c_i(r_i)=V_i\cap \tub\big(B(v_i,r_i),\rho\big)$, de sorte que, pour tout $i\ge i_0$, on a $\dist(v_i,z_i(r_i))\ge\rho$, où \og{}$\dist$\fg{} désigne la distance dans $K(V)$.


\begin{figure}[h]
\begin{center}
\includegraphics[width=0.8\linewidth]{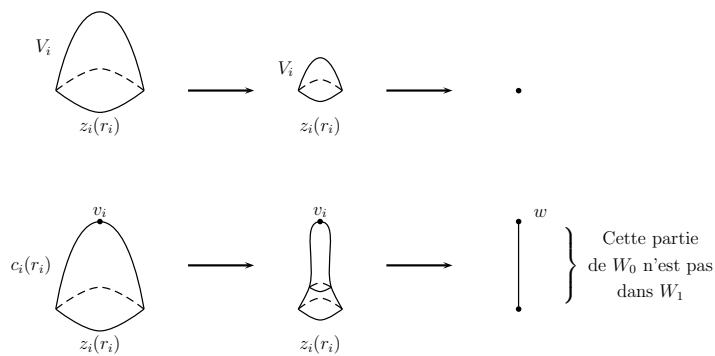}
\caption{Couper les doigts}
\end{center}
\end{figure}


Quitte à négliger une nouvelle extraction, on peut supposer que l'on a  : $v_i\tend w\in W_\infty$. 

Considérons une suite $(w_i)$ telle que, pour tout $i\ge i_0$, on ait $w_i\in C_i$. Rappelons nous aussi que $C_i$ est contenue dans le voisinage tubulaire de rayon $\eps_i=\beta_{n-1}\vol(z_i(r_i))^{\frac{1}{n-1}}$ de $z_i(r_i)$ et que l'on a $\eps_i\tend 0$. 

A partir d'un certain rang, on aura donc $\dist(w_i,w)\ge \tfrac {\rho} 2$, puisque l'on a $\dist(z_i(r_i),v_i)\ge\rho$. En effet :
$$\dist(z_i(r_i),v_i)\le\dist(z_i(r_i),w_i)+\dist(w_i,w)+\dist(w,v_i)$$
et $\dist(z_i(r_i),w_i)\tend 0$, $\dist(w,v_i)\tend 0$.\\
Le point $w$ ne peut être limite d'une suite $(w_i)$ telle que $w_i\in C_i$ pour tout $i\ge i_0$. Mais, pour tout $w'_i\in V_i'\setminus C_i$, on a $\dist(v_i,w'_i)\ge \rho$ : ainsi $w$ ne peut être limite d'une suite $(w_i)$ telle que $w_i\in V_i'$ à partir d'un certain rang. Selon le  résultat \textbf{R1} (page \pageref{H1}), $w\notin W_1$.

Montrons enfin que $W_1\subset W_\infty$. Soit $w_1\in W_1$. Il existe alors une suite $(v_i')$ dans $K(V)$ telle que $v_i'\tend w_1$ et $v_i'\in V_i'$ à partir d'un certain rang. Supposons, qu'à partir d'un certain rang, on ait $v_i'\notin C_i$. Le point $w_1$ est donc dans $W_\infty$, d'après le résultat \textbf{R1}. Sinon, quitte à extraire, on peut supposer que $v_i'\in C_i$ à partir d'un certain rang. Dans ce cas, $\dist(z_i(r_i),v_i')\tend 0$. Il existe donc une suite $(x_i)$ dans $K(V)$ telle que $x_i\in  z_i(r_i)$ et $\dist(x_i,v_i')\tend 0$. Par compacité, quitte à extraire, on peut supposer que $x_i\tend x$. D'après le  résultat \textbf{R1}, $x\in W_\infty$. Mais, par inégalité triangulaire, on obtient $v_i'\tend x$. Par unicité de la limite, il vient $w_1\in W_\infty$.

En conclusion, $W_1$ est une partie compacte propre de $W_\infty$ qui est limite d'une suite minimisante de pseudo-variétés, ce qui contredit le caractère minimal de $W_\infty$.  
\end{proof}


Le lemme \ref{lemmevoisinagetub} est alors prouvé. Ce lemme étant vrai pour $\rho$ arbitrairement petit, le théorème B est alors démontré, la métrique riemannienne lisse par morceaux sur les pseudo-variétés $V_i$ étant celle fournie par la remarque \ref{rema-metrique} page \pageref{rema-metrique}. L'application $f_i$ étant égale à la restriction de $F$ à chaque $V_i$ (voir, pour les notations, le lemme \ref{prolongement} et la page \pageref{notations}).

\subsection{La démonstration du théorème A}
\begin{sloppypar}Fixons $\eps>0$. On prend un cycle géométrique normalisé $(V_1,f_1,g_1)$ tel que $\sys(V_1,f_1,g_1)=2$ et $\sigma(V_1,f_1,g_1)\le \sigma(h)+\eps_1$ où $\eps_1>0$ est petit. Pour $\eps_2>0$, suffisamment petit, et $V_0$ une partie suffisamment dense de $V_1$, on peut construire le complexe $K(V_1)$, ainsi que le plongement $V'=R_{\eps_2}\circ I_0(V_1)$ de $V_1$ dans $K(V_1)$. L'application $R_{\eps_2}\circ I_0$ étant lipschitzienne de rapport $\dfrac{1}{1-2\eps_2}$, on a  :\end{sloppypar}
$$\vol(V')\le \left(\dfrac{1}{1-2\eps_2}\right)^n\vol(V_1,g_1).$$

Puis, le théorème B nous donne, pour tout $a$ fixé dans $]0,1]$, une suite $(V_i,f_i,g_i)$ de cycles géométriques, qui représentent la classe d'homologie $h$, telle que :
\begin{quote}
\begin{enumerate}
	\item $\vol(V_i)\tend \vol[V']$ ;
	\item Pour $R\in [a,1]$, les boules $B(R)$ de rayon $R$ dans  chaque $V_i$ vérifient :
\begin{equation}
\label{volboule2}
		\vol (B(R))\ge A_n (R-a)^n
\end{equation}
\end{enumerate}
\end{quote}

Comme $\vol[V']\le  \vol(V',g_0)$, il vient $\vol[V']\le \left(\dfrac{1}{1-2\eps_2}\right)^n\vol(V_1,g_1)$. \\
Soit $\eps_3>0$. Pour $i$ suffisamment grand,  puisque $\vol(V_i)\tend \vol[V']$, on a :
\begin{equation}
\label{eq-inegavol}
	\vol (V_i)\le (1+\eps_3)\left(\dfrac{1}{1-2\eps_2}\right)^n\vol(V_1,g_1)
\end{equation}
Or $\vol(V_i,g_i)=\vol(V_i)$, et, d'après le théorème \ref{theosyst}, on a :$$2=\sys(V_1,f_1,f_1))\le \sys(V_i,f_i,g_i).$$ Il en résulte que :
\eq
\sigma(V_i,f_i,g_i)&\le &(1+\eps_3) \left(\dfrac{1}{1-2\eps_2}\right)^n\sigma(V_1,f_1,g_1)\\
&\le& (1+\eps_3)\left(\dfrac{1}{1-2\eps_2}\right)^n(\sigma(h)+\eps_1).
\fineq
Un choix judicieux de $\eps_1$, $\eps_2$ et $\eps_3$ permet d'obtenir, pour $i$ suffisamment grand  :
$$\sigma(V_i,f_i,g_i)\le \sigma(h)+\eps.$$
Puis on a $\sigma(h)\le \dfrac{\vol(V_i,g_i)}{\sys(V_i,f_i,g_i)^n}$, donc, en utilisant l'inégalité $\sigma(V_1,f_1,g_1)\le \sigma(h)+\eps_1$, on obtient : 
$$\sigma(V_1,f_1,g_1)-\eps_1\le \sigma(h)\le \sigma(V_i,f_i,g_i).$$
Cela amène :
$$\vol(V_1,f_1,g_1)-2^n\eps_1\le \dfrac{2^n}{\sys(V_i,f_i,g_i)^n}\vol(V_i,g_i).$$
A partir d'un certain rang, on obtient finalement, en utilisant l'inégalité (\ref{eq-inegavol}), $$\sys(V_i,f_i,g_i)^n\le  2^n \dfrac{\vol(V_i,g_i)}{\vol(V_1,f_1,g_1)-2^n\eps_1}\le 2^n(1+\eps_4),$$
pour tout $\eps_4>0$ fixé à l'avance. \\
Techniquement, on a donc prouvé que, pour tout $\eps>0$, $a>0$ et $b>0$ petits, il existe un cycle géométrique $(V,f,g)$ représentant $h$ tel que 
$$\sigma(V,f,g)\le \sigma(h)+\eps\et \vol(B(R))\ge A_n(R-a)^n$$
pour tout $R\in [a,\tfrac 12\sys(V,f,g)-b]$, ce qui achève la preuve du théorème A.


\section{Autour du théorème A et de sa démonstration}
Je vais, dans ce paragraphe, donner quelques conséquences immédiates du théorème A et commenter quelques notions rencontrées, notamment donner une définition alternative des cycles réguliers.
\subsection{Quelques conséquences du théorème A}
\begin{sloppypar}
La première conséquence concerne le volume systolique d'une classe d'homologie non nulle dans $H_n(\pi;\Z)$, lorsque $\pi$ est un groupe de présentation finie.
\end{sloppypar}
\begin{theo}
\label{theo-sigmah}
Soient $\pi$ un groupe de présentation finie et $n\ge 1$. Il existe une constante $C_n>0$, qui ne dépend que de $n$, telle que pour toute classe d'homologie non nulle $h$ dans $H_n(\pi;\Z)$ on ait :
$$\sigma(h)\ge C_n.$$ 
\end{theo}

\begin{proof}[Démonstration]
Soit $h\in H_n(\pi;\Z)$. Fixons  provisoirement $\eps>0$. Il existe alors, selon le théorème A, un cycle géométrique $\eps$-régulier $(V,f,g)$ qui représente la classe $h$. Soit $v\in V$. On a alors :
$$\vol(V,g)\ge \vol\big(B(v,\tfrac 12\sys(V,f,g)\big)\ge \dfrac{A_n}{2^n}\:\sys(V,f,g)^n.$$
Il en résulte que $\sigma (V,f,g)\ge \dfrac{A_n}{2^n}$. Ainsi :
$$\sigma(h)+\eps\ge \dfrac{A_n}{2^n}.$$
En faisant tendre $\eps$ vers $0$, on obtient le résultat souhaité, avec $C_n=\dfrac{A_n}{2^n}$.
\end{proof}

On peut alors en déduire une preuve de l'inégalité systolique de Gromov. 

\begin{gromov}Il existe une constante $C_n>0$ telle que pour toute variété  essentielle et orientable $M$ de dimension $n$ on ait :
$$\sigma(M)\ge C_n.$$
\end{gromov}

\begin{proof}[Démonstration]
Soit $M$ une variété essentielle orientable de dimension $n$, de groupe fondamental $\pi$. Il existe une application $f:M\to \K$, unique à homotopie près, telle que $f_*[M]=h\neq 0$, où 
$$f_*:H_n(M;\Z)\to H_n(\pi;\Z)$$
est le morphisme induit en homologie. D'après le théorème \ref{theo-sigmah}, on a $\sigma(h)\ge C_n$, pour une constante universelle $C_n>0$ qui ne dépend que de $n$. Mais $(M,f)$ est une représentation normalisée admissible de $h$ : selon \cite{Babenko-Balacheff-DVS}, on a $\sigma(M)=\sigma(h)$.
\end{proof}

\subsection{Cycles réguliers et remplissage}

Considérons un complexe $L$, muni d'une métrique riemannienne lisse par morceaux, dans lequel les cycles vérifient une inégalité isopérimétrique semblable à celle du théorème \ref{theo-inégalitéisocomplexes}. Plus précisément on suppose que la propriété suivante est vérifiée dans $L$.\\

\begin{inegiso}Pour tout cycle $z$ de dimension $n$ dans $L$, il existe une constante $c_n$ telle que :
\begin{equation}
\label{InegaliteisoL}
	\volremp(z\subset L)\le c_n[\vol(z)]^{\frac {n+1}n}.
\end{equation}
\end{inegiso}

\begin{defi}
Soit $(V,f,g)$ un cycle géométrique inclus $L$, $g$ étant la métrique induite par celle de $L$. Soit $\eps\in ]0,\tfrac12\sys(V,f,g)[$. On dira que $(V,f,g)$ est \textit{à remplissage $\eps$-régulier}\index{Cycle géométrique à remplissage régulier} lorsque pour tout $R$ dans 
$[\eps,\tfrac12\sys(V,f,g)[$, les boules $B_V(R)$ de rayon $R$ dans $V$ vérifient :
\begin{equation}
\label{inegalitecycreg}
	\vol (B_V(R))\le (1+\eps) \volremp (\partial B_V(R)\subset L)
\end{equation}
\end{defi}  

\begin{rema}
Comparer avec la définition donnée en \cite{Gromov-FRM}, \textbf{6.4} page 70.
\end{rema}

\begin{theo}
Soit $(V,f,g)$ un cycle géométrique à remplissage $\eps$-régulier dans $L$. Pour tout $R$ dans 
$[\eps,\tfrac12\sys(V,f,g)[$, Les boules $B_V(R)$ de rayon $R$ dans $V$ vérifient :
$$\vol (B_V(R))\ge C_n(R-\eps)^n,$$
pour une certaine constante $C_n$ qui ne dépend que de $n$.
\end{theo}

\begin{proof}[Démonstration]
Soit $R\in [\eps,\tfrac12\sys(V,f,g)[$. On pose $z(R)=\partial B_V(R)$, qui est un cycle de dimension $n-1$ dans $L$. Avec les inégalités (\ref{InegaliteisoL}) et (\ref{inegalitecycreg}), on obtient :
$$\vol (B_V(R))\le (1+\eps) C_n[\vol(\partial B_V(R))]^{\frac {n+1}n}.$$

Le lemme \ref{lem-analytique} assure alors que :
$$\vol\big(B_V(R)\big)\ge \dfrac1{(1+\eps)^{n-1}C_n^{n-1}(n)^{n}}\:(R-\eps)^n,$$
ce qui est le résultat souhaité avec $C_n=\dfrac1{(2c_n)^{n-1}(n)^{n}}$.
\end{proof}

\begin{rema}
Assurer l'existence de cycles à remplissage $\eps$-régulier dans le complexe $K(V)$ pourrait permettre de démontrer le théorème A. Mais une telle existence n'est pas plus facile à établir que la démarche proposée ici dans la preuve du théorème A.
\end{rema}

\bibliographystyle{amsalpha}
\bibliography{biblio_cycles}
\end{document}